\DeclareMathAlphabet{\mathcal}{OMS}{cmsy}{m}{n}
\DeclareSymbolFont{largesymbols}{OMX}{cmex}{m}{n}
\newtheorem{theorem}{Theorem}[section]
\newtheorem{proposition}[theorem]{Proposition}
\newtheorem{definition}[theorem]{Definition}
\newtheorem{lemma}[theorem]{Lemma}
\newtheorem{corollary}[theorem]{Corollary}
\newtheorem{thm}[theorem]{Theorem}
\newtheorem{remark}[theorem]{Remark}
\newtheorem{example}[theorem]{Example}
\newtheorem{prob}[theorem]{Problem}
\newcommand{\C}{\mathbb{C}}
\newcommand{\co}{\mathcal{O}}
\newcommand{\af}{\mathds{A}}
\newcommand{\N}{\mathbb{N}}
\newcommand{\F}{\mathbb{F}}
\newcommand{\LL}{\mathbb{L}}
\newcommand{\LA}{\Longleftarrow}
\newcommand{\RA}{\Longrightarrow}
\newcommand{\ra}{\longrightarrow}
\newcommand{\GL}{{\rm GL}}
\newcommand{\SL}{{\rm SL}}
\newcommand{\LM}{{\rm LM}}
\newcommand{\de}{{\rm def}}
\newcommand{\cha}{{\rm char}}
\newcommand{\rank}{{\rm rank}}
\newcommand{\sing}{{\rm Sing}}
\newcommand{\dep}{{\rm depth}}
\newcommand{\var}{{\rm Var}}
\newcommand{\st}{{\rm st}}
\newcommand{\age}{{\rm age}}
\newcommand{\conj}{{\rm Conj}}
\newcommand{\hbo}{\hfill\Box}
\begin{document}
\setlength{\oddsidemargin}{0cm}
\setlength{\evensidemargin}{0cm}

%%%%%%%%%%%%%%%%%%%%%%%%%%%%%%%%%%% TITLE, AUTHOR AND ADDRESS
%
\title{\scshape Modular quotient varieties and singularities by the cyclic group of order $2p$}

\author{\scshape Yin Chen}
\address{School of Mathematics and Statistics, Northeast Normal University, Changchun 130024, P.R. China}
\email{ychen@nenu.edu.cn}

\author{\scshape Rong Du$^{\dag}$}
\address{School of Mathematical Sciences\\
Shanghai Key Laboratory of PMMP\\
East China Normal University\\
Rm. 312, Math. Bldg, No. 500, Dongchuan Road\\
Shanghai, 200241, P. R. China}
\email{rdu@math.ecnu.edu.cn}

\author{\scshape Yun Gao}
\address{Department of Mathematics, Shanghai Jiao Tong University,
Shanghai 200240, P. R. of China}
\email{gaoyunmath@sjtu.edu.cn}

\date{\today}
\def\shorttitle{Modular quotient varieties and singularities}

\thanks{$^{\dag}$ Corresponding author.}

%%%%%%%%%%%%%%%%%%%%%%%%%%%%%%%%%%%ABSTRACT, MSC AND KEYWORDS
%
\begin{abstract}
We  classify all $n$-dimensional reduced Cohen-Macaulay modular quotient variety $\af^n_{
\F}/C_{2p}$ and study their singularities, where $p$ is a prime number and $C_{2p}$ denotes the cyclic group of order $2p$. In
particular, we present an example that demonstrates that the problem proposed by Yasuda \cite[Problem 6.6]{Yas2015}
has a negative answer if the condition that ``$G$ is a small subgroup'' was dropped. %This shows that the direct generalization of McKay correspondence to the modular cases is not feasible.
\end{abstract}

\subjclass[2010]{13A50; 14R20.}
\keywords{Modular invariants; quotient singularities; Cohen-Macaulay; cyclic groups.}

\maketitle
\baselineskip=15pt

%%%%%%%%%%%%%%%%%%%%%%%%%%%%%%%%%%%%%CONTENTS
%\textcolor{blue}{\tableofcontents{}}

\dottedcontents{section}[1.16cm]{}{1.8em}{5pt}
\dottedcontents{subsection}[2.00cm]{}{2.7em}{5pt}
\dottedcontents{subsubsection}[2.86cm]{}{3.4em}{5pt}

%\newpage

%%%%%%%%%%%%%%%%%%%%% let's begin %%%%%%%%%%%%%%%%%%

%%%%%%%%%%%%%%%%%%%%%%%%%%%%%%%%%%%%%%%%%%%SECTION 1
\section{\scshape Introduction}

Let $\F$ be an algebraically closed field of any characteristic  and
$\af_{\F}^{n}$ be the $n$-dimensional affine space over $\F$.
Suppose a finite group $G$ acts on $\af_{\F}^{n}$, linearly but not necessarily faithfully.
Then the coordinate ring of the quotient variety $\af_{\F}^{n}/G$ is isomorphic to the invariant ring $\co(\af_{\F}^{n})^{G}$ which is the main object of study in Algebraic Invariant Theory. The groundbreaking theorem which says that $\co(\af_{\F}^{n})^{G}$ is always a finitely generated $\F$-algebra, was proved by Emmy Noether in 1926. Since then, the study on relations  between 
the algebraic structure of $\co(\af_{\F}^{n})^{G}$ and the geometry of $G$ on $\af_{\F}^{n}$ has been occupied the central position in the invariant theory of finite groups; see Campbell-Wehlau \cite{CW2011}, Derksen-Kemper \cite{DK2015} and Neusel-Smith \cite{NS2002} for general references.
We say that the quotient variety $\af_{\F}^{n}/G$ is \textit{modular} if the characteristic of $\F$ divides the order of $G$; otherwise \textit{nonmodular}. The affine space $\af_{\F}^{n}$ could be regarded as a linear representation of $G$, and  we call $\af_{\F}^{n}/G$ \textit{reduced} if any direct summand of $\af_{\F}^{n}$ is not isomorphic to the one-dimensional trivial representation of $G$.
We also say that $\af_{\F}^{n}/G$ is a \textit{Cohen-Macaulay} (\textit{Gorenstein, complete intersection, hypersurface}) quotient if $\co(\af_{\F}^{n})^{G}$ is \textit{Cohen-Macaulay} (\textit{Gorenstein, complete intersection, hypersurface} respectively). We use $\sing_{G}(\af_{\F}^{n})$ to denote the set of all singularities in $\af_{\F}^{n}/G$.

Suppose $\af_{\F}^{n}/G$ is nonmodular, i.e.,  the characteristic of $\F$ is zero or a prime $p$ such that $p$ does not divide the order of $G$. The famous Shephard-Todd-Chevalley theorem says that $\sing_{G}(\af_{\F}^{n})$ is empty (i.e., $\af_{\F}^{n}/G$ is smooth) if and only if the action of $G$ on $\af_{\F}^{n}$ is a group generated by reflections; see Shephard-Todd \cite{ST1954} and Chevalley \cite{Che1955}.  Noether's bound theorem demonstrates that any invariant ring $\co(\af_{\F}^{n})^{G}$ can be generated by polynomial invariants of degree less than or equal to the order of $G$; see for example Derksen-Kemper \cite[Corollary 3.2.4]{DK2015}.
Another important result due to Hochster and Eagon asserts that
$\af_{\F}^{n}/G$ is always Cohen-Macaulay; see Hochster-Eagon \cite{HE1971}.

For  modular quotient varieties  $\af_{\F}^{n}/G$, the situation becomes to be worse and more complicated.
A theorem of Serre shows that if $\af_{\F}^{n}/G$ is smooth, then the action of $G$ on $\af_{\F}^{n}$ is a group generated by reflections; see  \cite{Ser1968}. However, there exists a counterexample that demonstrates the converse statement of Serre's theorem is not feasible in general; see Campbell-Wehlau \cite[Section 8.2]{CW2011}. Noether's bound theorem  and Hochster-Eagon's theorem also failed to hold in the modular cases; see Richman \cite{Ric1990} and Campbell-Wehlau \cite[Example 8.0.9]{CW2011}.
For the low-dimensional case, however,  if $n\leqslant3$ then $\af_{\F}^{n}/G$ is always Cohen-Macaulay; see Smith \cite{Smi1996}.
Further, considering $G=C_{4}$  acting on $\af_{\F}^{4}$ regularly and $\cha(\F)=2$, Bertin proved that
$\af_{\F}^{4}/C_{4}$ is factorial but not Cohen-Macaulay, answering a question of Pierre Samuel; see  \cite{Ber1967}.
For recent explicit computations of modular Cohen-Macaulay quotient varieties, see Chen \cite{Che2014}, Chen-Wehlau \cite{CW2017} and  references therein. 

Motivated by connecting representations theory with crepant resolutions of quotient varieties, the  McKay correspondence (which, roughly speaking, reveals an equality between an invariant of representations of a finite subgroup $G\subset \SL_{n}(\C)$ and  an invariant of a crepant resolution of the quotient variety $\C^n/G$) in characteristic 0 has been studied extensively; for example, see 
Batyrev \cite{Bat1999} and Batyrev-Dais \cite{BD1996} in terms of stringy invariants, Denef-Loeser \cite{DL2002} from the theory of motivic integration, and Reid \cite{Rei2002}.  In 2014, Yasuda \cite{Yas2014} studied the McKay correspondence for modular representations of the cyclic group $C_{p}$ of order $p$ and calculated the stringy invariant of the quotient variety via the motivic integration generalized to quotient stacks associated to representations, leading to  a conjecture of Yasuda \cite[Conjecture 1.1]{Yas2015} with a positive answer for the modular cases of $C_{p}$. In particular, it was proved that the Euler characteristic of the crepant resolution variety is equal to the number of the conjugacy class of $C_{p}$. In his notes, Yasuda \cite[Problem 6.6]{Yas2015} asks whether this statement remains valid for 
any  small subgroup $G\subset \GL_{n}(\C)$ in the modular cases. Here a ``small'' subgroup $G$ means $G$ contains no any non-identity reflections. 

\begin{prob}\label{prob1.1}
{\rm
Let $G\subset\GL_n({\F})$ be a small subgroup and $Y\ra X=\af_{\F}^{n}/G$ be a crepant resolution. Suppose $\cha(\F)=p>0$ divides the order of $G$.  Is the Euler characteristic of $Y$ equal to the number of the conjugacy classes of $G$?
}\end{prob}

There are some works done on the McKay correspondence in the modular cases and for different purposes; see Gonzalez-Verdier \cite{GV1985}, Schr\"oer \cite{Sch2009}, and Yasuda  \cite{Yas2017}. However, to the best of our knowledge, no more results (even for finite cyclic groups) have appeared to answer Problem \ref{prob1.1} except  Yasuda's work \cite{Yas2014} on the cyclic group $C_{p}$.

The purpose of this paper is to study singularities in $n$-dimensional  Cohen-Macaulay modular quotient variety $\af_{\F}^{n}/C_{2p}$, where $p$ is a prime number and $C_{m}$ denotes the cyclic group of order $m\in\N^{+}$. In particular, we will provide an example that demonstrates that Problem  \ref{prob1.1} has a negative answer  in the modular cases of $C_{2p}$
if the condition that ``$G$ is a small subgroup'' was dropped.

To make $\af_{\F}^{n}/C_{2p}$ to be a modular quotient variety, we have the following three situations:
(1) char$(\F)=p=2$; (2) char$(\F)=p>2$; and (3) char$(\F)=2$ and $p>2$.

For the first case we prove the following result.

\begin{thm}\label{thm1.2}
Let $\af_{\F}^{n}/C_{4}$ be an $n$-dimensional, reduced  Cohen-Macaulay quotient variety  in characteristic 2. Then
\begin{enumerate}
  \item $n\leqslant 4$ and $\af_{\F}^{n}/C_{4}$ is always a hypersurface;
  \item $\af_{\F}^{n}/C_{4}$ is smooth if and only if $n=2$; in this case, $\af_{\F}^{2}/C_{4}$ is isomorphic to $\af_{\F}^{2}/C_{2}$;
  \item $\sing_{C_{4}}(\af_{\F}^{3})\cong \af_{\F}^{1}$ and $\sing_{C_{4}}(\af_{\F}^{4})\cong \af_{\F}^{2}$.
\end{enumerate}
\end{thm}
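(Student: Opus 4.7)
The plan is to combine the classification of indecomposable $\F[C_4]$-modules in characteristic $2$ with a depth argument to produce a short list of Cohen--Macaulay candidates, and then to work out each surviving case by an explicit invariant computation.

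\textbf{Representation-theoretic setup.} Since $\cha(\F)=2$ and $C_4$ is a cyclic $2$-group, the indecomposable $\F[C_4]$-modules are (up to isomorphism) the Jordan blocks $V_1,V_2,V_3,V_4$ of dimensions $1,2,3,4$, on each of which a generator $\sigma$ of $C_4$ acts as $I+N$ with $N$ a nilpotent Jordan block. The reduced hypothesis excludes $V_1$, so $V\cong V_2^{a_2}\oplus V_3^{a_3}\oplus V_4^{a_4}$, $n = 2a_2+3a_3+4a_4$, and $\dim_\F V^{C_4} = a_2+a_3+a_4$.

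\textbf{Narrowing the CM cases.} A depth calculation of the Ellingsrud--Skjelbred type, adapted to the cyclic $2$-group $C_4$, gives the bound $\dep\,\co(\af_\F^n)^{C_4}\leq \dim_\F V^{C_4}+2$. Hence $\co(\af_\F^n)^{C_4}$ is Cohen--Macaulay only when $a_2+2a_3+3a_4\leq 2$. The non-zero solutions are $(a_2,a_3,a_4)=(1,0,0),(2,0,0),(0,1,0)$, giving $V_2,V_2^{\oplus 2},V_3$ of dimensions $2,4,3$ respectively. In particular $V_4$ itself fails to be Cohen--Macaulay (matching Bertin's counterexample), and every reduced $V$ with $n\geq 5$ is ruled out; this proves $n\leq 4$.

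\textbf{Explicit invariant rings and singular loci.} For $V=V_2$: since $\sigma^2=I$ on $V_2$ the action factors through $C_2$, and a direct computation gives $\co(\af_\F^{2})^{C_4}=\F[x_1,\,x_2^2+x_1x_2]$, a polynomial ring; this yields (2) together with the identification $\af_\F^{2}/C_4\cong\af_\F^{2}/C_2$. For $V=V_2\oplus V_2$: again $\sigma^2=I$, and one checks that the invariant ring is generated by $x_1,y_1,u=x_2^2+x_1x_2,v=y_2^2+y_1y_2,t=x_1y_2+x_2y_1$ subject to the single relation $t^2+x_1y_1t+uy_1^2+vx_1^2=0$; the Jacobian cuts out $\{x_1=y_1=t=0\}\cong\af_\F^{2}$. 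For $V=V_3$: one first computes $\co(\af_\F^{3})^{\langle\sigma^2\rangle}=\F[x_1,x_2,u]$ with $u=x_3^2+x_1x_3$ (by the $V_2$ case applied to $\sigma^2$), then descends to $C_4$-invariants using $a=x_1$, $b=x_2^2+x_1x_2$, $c=u^2+bu$, and the transfer $d=\mathrm{Tr}(x_2u)=x_1x_3^2+x_1^2x_3+x_1^2x_2+x_2^3$, and verifies the defining hypersurface relation $d^2+abd+a^2c+b^3=0$; the Jacobian then picks out $\{a=b=d=0\}\cong\af_\F^{1}$.

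\textbf{Anticipated obstacle.} The most delicate step is the depth analysis ruling out all cases with $n\geq 5$ (or with a $V_4$ summand), together with the explicit hypersurface presentation for $V_3$. The $V_2$ and $V_2\oplus V_2$ cases ultimately reduce to the classical invariant theory of $C_2$ in characteristic $2$; by contrast, the $V_3$ case is genuinely $C_4$-modular and requires a judicious choice of secondary invariant (here, the transfer $\mathrm{Tr}(x_2u)$) before the defining equation and the singular locus become tractable.
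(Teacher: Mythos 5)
Your proposal is correct, and its overall skeleton (classify indecomposables, bound the Cohen--Macaulay cases, compute the three survivors) coincides with the paper's; the differences are in which tools carry each step. For the narrowing step the paper invokes Kemper's bi-reflection criterion (Theorem \ref{kemper99}) to force $\dim\im(\sigma-1)=a+2b+3c\leqslant 2$, whereas you use the Ellingsrud--Skjelbred depth formula (Theorem \ref{ES}); since $C_{4}$ is cyclic and equals its own Sylow $2$-subgroup, $\mathrm{codim}(W^{C_4})=\rank(\sigma-1)$ and the two bounds are identical --- your route even gives the converse (that $V_2$, $2V_2$, $V_3$ really are Cohen--Macaulay) for free, which Kemper's theorem does not. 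For the explicit invariant rings the paper relies on Magma for $V_3$ and on Campbell--Wehlau for $V_2$ and $2V_2$, while you compute $V_3$ by hand via the tower $\F[V_3]^{C_4}=\bigl(\F[x,y,z]^{\langle\sigma^2\rangle}\bigr)^{C_2}=\F[x,y,u]^{C_2}$ with $u=z^2+xz$ and a transfer element $d=\mathrm{Tr}(yu)$; I checked that your generators agree with the paper's up to the decomposable correction $d=h+f_1f_2$, that $c=u^2+bu$ equals the paper's $f_3$, that your relations $d^2+abd+a^2c+b^3=0$ and $t^2+x_1y_1t+uy_1^2+vx_1^2=0$ hold identically in characteristic $2$, and that the Jacobian computations give $\sing_{C_4}(V_3)\cong\af_{\F}^1$ and $\sing_{C_4}(2V_2)\cong\af_{\F}^2$ exactly as in the paper. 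The one place where you are no more rigorous than the paper is the assertion that $\{a,b,c,d\}$ (resp.\ the five elements for $2V_2$) actually generate the full invariant ring: you write ``one checks'' precisely where the paper writes ``Magma calculations show''; your descent does suggest a hand proof (e.g.\ a degree count: $\deg a\cdot\deg b\cdot\deg c=8=2|C_4|$ and $d$ accounts for the remaining index $2$), and it would strengthen the argument to include it.
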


For the second case, we prove that there always exists a  reduced modular  Cohen-Macaulay quotient variety in each dimension $n$; and all  reduced modular Cohen-Macaulay quotient varieties can be realized via one of seven families of modular representations
of $C_{2p}$; see Proposition \ref{prop3.1}.
This is the ingredient of classifying all two and three-dimensional reduced modular Cohen-Macaulay quotient varieties; see Corollary \ref{coro3.2}. Furthermore, we describe all quotient singularities and varieties in dimensions two and three as follows.

\begin{thm}\label{thm1.3}
Let $\af_{\F}^{n}/C_{2p}$ be an $n$-dimensional, reduced  Cohen-Macaulay quotient variety  in characteristic $p>2$.
\begin{enumerate}
  \item $\af_{\F}^{2}/C_{2p}$ is always a hypersurface and $\sing_{C_{2p}}(\af_{\F}^{2})$ either is empty or consists of the zero point;
  \item $\af_{\F}^{3}/C_{2p}$ is either a hypersurface or not Gorenstein.
  \end{enumerate}
\end{thm}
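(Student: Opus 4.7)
My plan is to leverage the classification of reduced Cohen-Macaulay $C_{2p}$-modules provided by Proposition~\ref{prop3.1} (specialized to small $n$ in Corollary~\ref{coro3.2}). In characteristic $p>2$ we have $C_{2p}\cong C_{p}\times C_{2}$ with $C_p$ modular and $C_2$ nonmodular, so by Krull--Schmidt every indecomposable $\F C_{2p}$-module is of the form $V_{k}\otimes\sig^{\,i}$ for $1\leqslant k\leqslant p$ and $i\in\{0,1\}$, where $V_k$ is the $k$-dimensional indecomposable $C_p$-module and $\sig$ denotes the sign character of $C_2$. The reduced hypothesis only excludes the trivial summand, so in each low dimension only a handful of isomorphism classes survive. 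The overall strategy is uniform: compute $\F[V]^{C_{2p}}$ in two stages (first $C_p$-invariants, then $C_2$-invariants) and then read off both the presentation and the singular locus.

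For part (1), the reduced two-dimensional modules are $V_{2}$, $V_{2}\otimes\sig$ and $(V_{1}\otimes\sig)^{\oplus 2}$. In the first case $\F[V_{2}]^{C_p}=\F[x,\,y^{p}-x^{p-1}y]$ is already polynomial and the residual $C_2$ acts trivially, so the quotient is smooth. In the remaining two cases a direct check shows that $C_2$ acts by $-1$ on both polynomial generators of the $C_p$-invariant subring, and a further quotient produces $\F[u,v,w]/(v^{2}-uw)$, the classical $A_{1}$ hypersurface whose unique singular point is the origin. The Jacobian criterion then yields both assertions of part (1).

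For part (2), the same uniform strategy applies to the five candidate reduced three-dimensional modules $V_{3}$, $V_{3}\otimes\sig$, $V_{2}\oplus(V_{1}\otimes\sig)$, $(V_{2}\otimes\sig)\oplus(V_{1}\otimes\sig)$ and $(V_{1}\otimes\sig)^{\oplus 3}$, intersected with the Cohen--Macaulay list from Proposition~\ref{prop3.1}. The organizing principle for detecting non-Gorensteinness is a modular analogue of Watanabe's criterion: when $C_{2p}$ acts without pseudoreflections and the invariant ring is Cohen--Macaulay, Gorensteinness is equivalent to $C_{2p}\subseteq\SL(V)$. A determinant computation immediately rules out Gorensteinness for $V_{3}\otimes\sig$ and $(V_{1}\otimes\sig)^{\oplus 3}$, where the generator of $C_2$ acts as $-I_{3}$ with determinant $-1$. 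For $V_{3}$, the known presentation of $\F[V_{3}]^{C_p}$ gives a hypersurface directly; for $V_{2}\oplus(V_1\otimes\sig)$ the two-stage quotient collapses to the polynomial ring $\F[x,\,y^{p}-x^{p-1}y,\,z^{2}]$, which is trivially a smooth hypersurface.

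The main obstacle is the mixed module $(V_{2}\otimes\sig)\oplus(V_{1}\otimes\sig)$, for which the generator of $C_p$ turns out to be a pseudoreflection (its fixed subspace has codimension one), so one first identifies $\F[V]^{C_p}$ as a polynomial ring in three carefully chosen generators and then handles the residual $C_2$-action, which negates all three. The resulting ring is a second Veronese subring of a polynomial ring in three variables, which is Cohen--Macaulay but not Gorenstein. A Hilbert-series or canonical-module calculation confirms that the Cohen--Macaulay type exceeds one, and closing this last case finishes the dichotomy claimed in~(2).
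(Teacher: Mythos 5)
Your proposal is correct and follows essentially the same route as the paper: reduce to the finite list of modules from Proposition \ref{prop3.1}/Corollary \ref{coro3.2}, compute invariants in two stages through the Sylow $p$-subgroup, identify the $A_1$ hypersurface in dimension two, and settle the non-Gorenstein cases in dimension three via the Veronese-type ring $\F[3V_1]^{C_2}$ together with the Watanabe/Smith criterion (the paper additionally writes out explicit generators for $\F[V_3^-]^{C_{2p}}$, but its Gorenstein conclusion rests on the same determinant-plus-no-pseudoreflections argument you use).
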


\noindent We also prove that if $n\geqslant 4$ and $\af_{\F}^{n}$ is isomorphic to the $n$-dimensional indecomposable
modular representation of $C_{2p}$, then $\af_{\F}^{n}/C_{2p}$ is never Cohen-Macaulay; see Proposition \ref{prop3.5}.

For the third case, we prove that all reduced modular Cohen-Macaulay quotient varieties can be realized through
direct sums of finitely many 1-dimensional and at most two 2-dimensional indecomposable representations; see Proposition \ref{prop4.1}. We calculate the invariant ring $\co(\af_{\F}^{2})^{C_{2p}}$ for any reduced quotient variety $\af_{\F}^{2}/C_{2p}$
by giving explicit set of generators and relations between these generators; see Propositions \ref{HWp} and \ref{prop4.6}.
As a consequence, the following result

\begin{thm} \label{thm1.4}
For any $1\leqslant i,j\leqslant p-1$, $\F[V_{i}\oplus V_{j}]^{C_{2p}}$ is Gorenstein if and only if $i+j=p$.
\end{thm}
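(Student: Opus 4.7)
The plan is to leverage the explicit presentation of $\F[V_i\oplus V_j]^{C_{2p}}$ as a quotient of a polynomial ring established in Propositions \ref{HWp} and \ref{prop4.6}, from which Cohen-Macaulay-ness is also known. With the Cohen-Macaulay property in hand, Gorenstein-ness is equivalent to the canonical module being cyclic or, equivalently, to the numerator of the Hilbert series (after clearing a homogeneous system of parameters) being palindromic.

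The observation guiding both directions is a determinant calculation: writing the generator $\sigma$ of $C_{2p}$ as the product of its semisimple part (scaling by $\zeta^i$ on $V_i$ and $\zeta^j$ on $V_j$, where $\zeta\in\F$ is a primitive $p$-th root of unity) and a unipotent part $I+N$ with $\det(I+N)=1$, the determinant of $\sigma$ on $V_i\oplus V_j$ is $\zeta^{2(i+j)}$. Since $p$ is odd, this equals $1$ iff $i+j\equiv 0\pmod{p}$, and under the constraint $1\le i,j\le p-1$ this is exactly $i+j=p$. The proposed dichotomy therefore coincides with the condition $\sigma\in\SL(V_i\oplus V_j)$, a Watanabe-type criterion that strongly motivates the claim.

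For the forward direction ($i+j=p$) I would specialize the presentation of Proposition \ref{prop4.6} to this case and verify that the resulting invariant ring becomes a hypersurface, or at least a complete intersection, which is automatically Gorenstein. For the reverse direction ($i+j\neq p$) I would compute the Hilbert series of $\F[V_i\oplus V_j]^{C_{2p}}$ in two stages: first apply the Molien formula to the $C_p$-subaction (which has order coprime to the characteristic) to obtain the Hilbert series of $\F[V_i\oplus V_j]^{C_p}$, then track the modular $C_2$-action on this graded ring via the transfer, and verify that the resulting numerator fails to be palindromic. Equivalently, one can exhibit at least two linearly independent socle classes modulo a convenient homogeneous system of parameters read off from Proposition \ref{prop4.6}.

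The main obstacle will be the reverse direction: the standard Molien formula does not apply to the whole group $C_{2p}$ in characteristic two, so the modular $C_2$-invariants must be tracked by hand. The failure of palindromicity (or the appearance of a multi-generator socle) has to be extracted from the explicit relations of Proposition \ref{prop4.6}, using the weight data $(i,j)$ modulo $p$ to count the degrees of generators and relations in each residue class carefully. Once the presentation is specialized, however, this reduces to a concrete combinatorial verification.
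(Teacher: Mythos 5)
You have correctly isolated the governing dichotomy --- $\det(\sigma)=1$ on $V_i\oplus V_j$ exactly when $i+j=p$ --- but the paper turns this observation into the \emph{entire} proof, whereas you relegate it to ``motivation'' and then propose a much heavier computational verification. The key fact you are missing is that the $C_{2p}$-action on $V_i\oplus V_j$ factors through $C_p$: the order-two element $\sigma^p$ acts as $\xi^{ip}=1$ on each $V_i$, so $\F[V_i\oplus V_j]^{C_{2p}}=\F[x,y]^{C_p}$ (this is already recorded in Proposition~\ref{HWp}), and $|C_p|=p$ is invertible in $\F$ since $\cha(\F)=2$. Hence the situation is entirely nonmodular: there is no ``modular $C_2$-action to track via the transfer,'' and the obstacle you single out as the main difficulty does not exist. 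Once this is noted, the image of $C_{2p}$ is a small (reflection-free) subgroup of $\GL(2,\F)$ of order prime to the characteristic, and both directions follow in one line each from Watanabe's theorems: \cite{Wat1974a} gives Gorenstein when the image lies in $\SL(2,\F)$ (i.e.\ $i+j=p$), and \cite{Wat1974b} gives the converse. Your route --- exhibiting a hypersurface presentation when $i+j=p$ (the $A_{p-1}$ singularity $\F[x^p,xy,y^p]$) and proving non-palindromicity of the Hilbert-series numerator, or a non-cyclic socle, when $i+j\neq p$ --- would in the end reprove Watanabe's criterion for this family; the forward half is fine, but the reverse half is precisely the hard combinatorial content (extracting the socle from the staircase of generators in Proposition~\ref{HWp} for arbitrary $i,j$ with $i+j\not\equiv 0 \bmod p$) and you have not actually carried it out, only asserted that it ``reduces to a concrete verification.''

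Two smaller inaccuracies: the determinant of $\sigma$ on $V_i\oplus V_j$ is $\xi^{i+j}$, not $\xi^{2(i+j)}$ (the $V_i$ here are one-dimensional and there is no unipotent part); this is harmless only because $p$ is odd, so the two conditions agree. Also, Proposition~\ref{prop4.6} concerns the two-dimensional indecomposables $W_k$, not $V_i\oplus V_j$; the presentation you want is the one in Proposition~\ref{HWp}.
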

 
 \noindent together with Remark \ref{rem4.8} gives rise to a classification of all reduced, two-dimensional modular Gorenstein quotient varieties.

In Section 2, we consider the modular quotient varieties of $C_{2p}$ in characteristic $p$ and give the proofs of Theorem \ref{thm1.2} and Theorem \ref{thm1.3}. In Section 3, we study the reduced modular quotient varieties of $C_{2p}$ ($p>2$) in characteristic $2$, emphasizing a characterization  of $2$-dimensional  modular Gorenstein quotient varieties
$\af^2_{\F}/C_{2p}$ in Theorem \ref{thm1.4}. Section 4 contains a discussion on the McKay correspondence in the modular cases (or wild McKay correspondence) and we present an example to show the necessary of the assumption that $G$ is a small subgroup  in Problem \ref{prob1.1}.

\section{\scshape Modular Quotient Varieties of $C_{2p}$ in Characteristic $p$} \label{section2}

Let $W$ be a finite-dimensional representation of a group $G$ over $\F$. Recall that a non-identity  invertible linear map $\sigma: W\ra W$ is called a \textbf{bi-reflection} if the dimension of the image of $\sigma-1$
is less than or equal to 2. We say that $G$ is a \textbf{bi-reflection group} on its representation $W$ if the image of $G$ under the group homomorphism $G\ra \GL(W)$ is a group generated by bi-reflections.
Let $\F[W]$ denote the symmetric algebra on the dual space $W^{*}$ of $W$. Choose a basis $e_{1},\dots,e_{n}$ for $W$ and a dual basis $x_{1},\dots,x_{n}$ in $W^{*}$. Then $\F[W]$ can be identified with the polynomial algebra $\F[x_{1},\dots,x_{n}]\cong 
\co(\af_{\F}^{n})$.
For a finitely generated graded subalgebra $A\subseteq \F[W]$, let $A_{+}$ denote its maximal homogeneous ideal.
The depth of a homogeneous ideal $I\subseteq A_{+}$, written $\dep_{A}(I)$, is the maximal length of a regular sequence in $I$.
In particular, we write $\dep(A)=\dep_{A}(A_{+})$.
We define $\de(A):=\dim(A)-\dep(A)$ to be the \textbf{Cohen-Macaulay defect} of $A$. Clearly, $A$ is Cohen-Macaulay if and only if 
$\de(A)=0$. In fact, the Cohen-Macaulay defect of $A$ measures how far $A$ is from being Cohen-Macaulay. 

The following two results are extremely useful to decide whether a modular invariant ring is a Cohen-Macaulay algebra.

\begin{thm}[Kemper \cite{Kem1999}] \label{kemper99}
Let $W$ be a finite-dimensional modular representation of a finite $p$-group $P$ over $\F$ of characteristic $p>0$. If the invariant ring $\F[W]^{P}$ is a Cohen-Macaulay algebra, then $P$ is a bi-reflection group on $W$.
\end{thm}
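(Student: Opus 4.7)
The approach is contrapositive: assuming $P$ does not act as a bi-reflection group on $W$, I will exhibit an obstruction to $\F[W]^{P}$ being Cohen-Macaulay. Let $H\leq P$ denote the normal subgroup generated by all bi-reflections of $P$ on $W$; by hypothesis $H\subsetneq P$. Since $P$ is a $p$-group, $P/H$ is a nontrivial $p$-group and so admits an element of order $p$; lifting, we obtain $g\in P\setminus H$. Because $g\neq 1$ and $g$ is not itself a bi-reflection (otherwise $g\in H$), we have $\dim\im(g-1)\geq 3$. If $g$ has order exceeding $p$, I would replace it by a suitable iterate $g^{p^{k}}$ of order exactly $p$, verifying via the Jordan block decomposition in characteristic $p$ that some iterate retains the inequality $\dim\im(\cdot-1)\geq 3$.

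The second step is a depth computation for the cyclic subgroup $Q=\langle g\rangle$ of order $p$. The cornerstone is the Ellingsrud-Skjelbred formula
\[\dep\F[W]^{Q}=\min(\dim W,\,\dim W^{Q}+2).\]
Since $\dim\im(g-1)\geq 3$ is equivalent to $\dim W^{Q}\leq\dim W-3$, this yields $\dep\F[W]^{Q}\leq\dim W-1$, so $\F[W]^{Q}$ fails to be Cohen-Macaulay.

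The final step is to propagate the depth defect from $Q$ up to $P$. The tool I would reach for is the relative transfer $\Tr_{Q}^{P}\colon\F[W]^{Q}\ra\F[W]^{P}$ together with the Hochschild-Serre spectral sequence in group cohomology, which one uses to compare $\dep\F[W]^{P}$ with the depth of $\F[W]^{Q}$ viewed as a module over $\F[W]^{P}$. The aim is to show that Cohen-Macaulayness of $\F[W]^{P}$ forces the depth of $\F[W]^{Q}$ to equal $\dim W$, contradicting step two.

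The main obstacle is precisely this propagation. The subgroup $Q$ is typically not normal in $P$, so there is no naive descent from $P$-invariants to $Q$-invariants, and one must deploy the full cohomological machinery for modular invariant rings, including the Ellingsrud-Skjelbred exact sequence, local cohomology of the Hilbert ideal, and transfer-based depth estimates. A secondary but nontrivial subtlety lies in step one: choosing the lifted element $g$ so that an order-$p$ iterate still violates the bi-reflection bound; here it is natural to select $g$ as a preimage of a well-placed element in a chief series of $P/H$, exploiting the Jordan structure of unipotent elements in characteristic $p$ to keep $\dim\im(\cdot-1)\geq 3$ under the necessary power operation.
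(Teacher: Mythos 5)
First, a point of reference: the paper itself gives no proof of this statement --- it is quoted from Kemper's 1999 article --- so your attempt has to be measured against Kemper's published argument. Your steps one and two are essentially sound: any $g\in P\setminus H$ is neither the identity nor a bi-reflection, so $\dim\im(g-1)\geqslant 3$, and Ellingsrud--Skjelbred applied to the cyclic group $Q=\langle g\rangle$ shows $\F[W]^{Q}$ is not Cohen--Macaulay. (Two small remarks here: Ellingsrud--Skjelbred needs no reduction to order exactly $p$, since it applies to any cyclic group with $\mathrm{codim}(W^{Q})\geqslant 3$; and your proposed reduction is actually false as stated --- for a single Jordan block of size $p+1$ one has $\dim\im(g-1)=p$ but $\dim\im(g^{p}-1)=1$, so no iterate of order $p$ retains the bound.)

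The genuine gap is step three, exactly where you place it, and it is not a technical loose end but a wrong strategy. The implication you need --- that Cohen--Macaulayness of $\F[W]^{P}$ forces $\F[W]^{Q}$ to have depth $\dim W$ for a subgroup $Q\leqslant P$ --- is not a theorem. Transfer/direct-summand arguments move the Cohen--Macaulay property \emph{up} from a subgroup whose index is invertible in $\F$; here the index is a power of $p$, and there is no descent in the other direction. Moreover, if such a descent existed, your argument would prove that \emph{every} non-identity element of $P$ is a bi-reflection whenever $\F[W]^{P}$ is Cohen--Macaulay, which is strictly stronger than Kemper's conclusion (products of bi-reflections need not be bi-reflections) and is not what is true. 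Kemper's actual proof is structurally different: he chooses a normal subgroup $N\supseteq H$ with $P/N\cong\Z/p$, regards the quotient map as a nonzero class $\alpha\in H^{1}(P,\F)\subseteq H^{1}(P,\F[W])$, and exploits the fact that \emph{every} element of the coset complement $P\setminus N$ has $\dim\im(g-1)\geqslant 3$ to produce three homogeneous invariants forming a partial homogeneous system of parameters that annihilate $\alpha$; by his cohomological criterion (a descendant of the Ellingsrud--Skjelbred spectral sequence relating $H^{>0}(P,\F[W])$ to the failure of regular sequences in $\F[W]^{P}$), such a configuration is incompatible with $\F[W]^{P}$ being Cohen--Macaulay. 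The uniform rank bound over a whole coset of a normal subgroup of index $p$ --- rather than over a single, typically non-normal, cyclic subgroup --- is the idea missing from your outline, and it is what makes the obstruction live in the invariant ring of $P$ itself instead of in that of a subgroup.
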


\begin{thm}[Ellingsrud-Skjelbred \cite{ES1980} or Kemper \cite{Kem2012}] \label{ES}
Let $G$ be a cyclic group  with Sylow $p$-subgroup $P$ and $W$ be any finite-dimensional modular representation of $G$ over a field $\F$ of characteristic $p>0$. Then $\de(\F[W]^G)=\max\{\textrm{codim}(W^P)-2,0\}$.
\end{thm}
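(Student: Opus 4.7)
The plan is to prove the formula in two stages: first reduce the problem from the cyclic group $G$ to its Sylow $p$-subgroup $P$, and then establish the formula for cyclic $p$-groups, which is the original Ellingsrud--Skjelbred theorem. Since $G$ is cyclic, it splits as $G = P \times H$ with $|H|$ coprime to $p$; this direct-factor structure is the only place where cyclicity of $G$ (as opposed to just cyclicity of $P$) is used.

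For the reduction, set $R := \F[W]^{P}$ and $S := \F[W]^{G} = R^{H}$. Both have Krull dimension $\dim W$, so it suffices to prove $\dep(S) = \dep(R)$. The Reynolds operator for the $p'$-group $H$ splits $R = S \oplus M$ as graded $S$-modules, which combined with the fact that $\dep(R)$ equals the depth of $R$ viewed as an $S$-module (by integrality of $R$ over $S$) gives $\dep(R) \leqslant \dep(S)$. For the reverse inequality I would use the standard identification
\[
H^{i}_{\mathfrak{m}_{S}}(S) \;\cong\; H^{i}_{\mathfrak{m}_{R}}(R)^{H},
\]
coming from exactness of $(-)^{H}$ on $p'$-modules and integrality; vanishing of the right-hand side at some index forces vanishing of the left, while matching non-vanishing at $i = \dep(R)$ will be supplied by the $H$-equivariance of the explicit cocycle constructed below.

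Now assume $G = P$ is cyclic of order $p^{k}$, set $d := \dim W^{P}$ and $c := \mathrm{codim}(W^{P})$. The degenerate cases are immediate: if $c = 0$, then $P$ acts trivially and $\F[W]^{P} = \F[W]$; if $c = 1$, then every element of $P$ is forced to be a transvection on $W$ and $\F[W]^{P}$ is polynomial; both are Cohen--Macaulay, matching $\max(c-2,0) = 0$. Henceforth assume $c \geqslant 2$; I must show $\dep(\F[W]^{P}) = d + 2$. For the lower bound $\dep \geqslant d+2$, I produce an explicit length-$(d+2)$ regular sequence: the $d$ coordinates dual to a basis of $W^{P}$ are $P$-invariant linear forms and form a partial regular sequence, to which one appends a norm invariant $N = \prod_{\sigma \in P}\sigma(\ell)$ for a generic linear form $\ell$ off $W^{P}$, together with a transfer invariant $\mathrm{Tr}^{P}(f)$ for a well-chosen monomial $f$, verifying regularity using the unipotent action of $P$ on a complement of $W^{P}$.

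The main obstacle is the matching upper bound $\dep \leqslant d + 2$, which requires producing a non-vanishing class in $H^{d+2}_{\mathfrak{m}}(\F[W]^{P})$. My plan exploits the characteristic-$p$ identity $(1 - \sigma)^{p^{k} - 1} = (-1)^{p^{k}-1}\,\mathrm{Tr}^{P}$ in the group algebra $\F[P]$ for $\sigma$ a generator of $P$; this identity is specific to cyclic $p$-groups (it fails once $P$ has more than one generator), and is precisely what makes the formula sharper than the one-sided bound of Theorem \ref{kemper99}. The short exact sequence of $\F[W]^{P}$-modules
\[
0 \longrightarrow \F[W]^{P} \longrightarrow \F[W] \xrightarrow{\;1-\sigma\;} (1-\sigma)\F[W] \longrightarrow 0
\]
induces a long exact sequence in local cohomology that, in combination with the above identity and the known Cohen--Macaulay local cohomology of the polynomial ring $\F[W]$, produces an explicit nonzero class in $H^{d+2}_{\mathfrak{m}}(\F[W]^{P})$ supported on $W \setminus W^{P}$. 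The whole construction is $H$-equivariant, so this class is automatically $H$-invariant, which closes the loop back to the reduction step.
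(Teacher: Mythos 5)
The first thing to say is that the paper contains no proof of this statement: it is imported wholesale from Ellingsrud--Skjelbred \cite{ES1980} and Kemper \cite{Kem2012}, so your attempt can only be measured against those sources. Your two-stage architecture --- split the cyclic group as $G=P\times H$, pass to $H$-invariants by linear reductivity, then treat the cyclic $p$-group $P$ using the identity $\mathrm{Tr}^{P}=(\sigma-1)^{|P|-1}$ in $\F[P]$ together with local cohomology --- is genuinely the shape of the proofs in the literature, so the route is the right one. The problem is that, as written, every hard step is asserted rather than carried out, and one assertion is false.

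Concretely: (1) the coordinates dual to a basis of $W^{P}$ are \emph{not} $P$-invariant in general; the invariant linear forms are $(W^{*})^{P}$, which is dual to the coinvariants $W/(\sigma-1)W$, not to $W^{P}=\ker(\sigma-1)$ (already for $V_{2}$ with $\sigma e_{2}=e_{1}+e_{2}$, the form dual to the fixed vector $e_{1}$ is not fixed). The dimension count $d$ survives because $P$ is cyclic, but the identification must be repaired. (2) Even after that fix, the regularity in $\F[W]^{P}$ of your proposed length-$(d+2)$ sequence is the entire content of the lower bound and is left unverified; a partial homogeneous system of parameters in a modular invariant ring need not be a regular sequence there --- the failure of exactly such a sequence is what Proposition \ref{prop3.5} of this paper exploits --- and the cited sources prove $\dep(\F[W]^{G})\geqslant\min\{\dim W^{P}+2,\dim W\}$ cohomologically rather than by exhibiting a regular sequence. (3) For the upper bound, the long exact sequence attached to $0\ra\F[W]^{P}\ra\F[W]\ra(1-\sigma)\F[W]\ra 0$ only gives $H^{i}_{\mathfrak{m}}(\F[W]^{P})\cong H^{i-1}_{\mathfrak{m}}\bigl((1-\sigma)\F[W]\bigr)$ for $2\leqslant i\leqslant\dim W-1$; you still have to prove $H^{d+1}_{\mathfrak{m}}\bigl((1-\sigma)\F[W]\bigr)\neq 0$, which requires descending the whole filtration $(1-\sigma)^{j}\F[W]$ and identifying the supports of its successive quotients with $W^{P}$. ``Produces an explicit nonzero class'' is where the theorem actually lives. (4) The reverse inequality $\dep(S)\leqslant\dep(R)$ in your reduction needs a nonzero \emph{$H$-invariant} element of $H^{\dep(R)}_{\mathfrak{m}}(R)$; since a nonzero module over a $p'$-group can perfectly well have zero invariants, this is not automatic and rests entirely on the class you never construct. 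In short: correct skeleton, but a roadmap rather than a proof.
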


Now we give a proof of Theorem \ref{thm1.2}.

\begin{proof}[Proof of Theorem \ref{thm1.2}] Let $C_{4}$ be generated by $\sigma$ and $V$ be any finite-dimensional indecomposable representation of $C_{4}$ over $\F$. It is well-known that  $V$ must be isomorphic to one of $\{V_{k}\mid 1\leqslant k\leqslant 4\}$, where $\dim(V_{k})=k$ and $V_{k}$ corresponds to the group homomorphism $C_{4}$ to $\GL(V_{k})$ defined by carrying $\sigma$ to the $k\times k$-Jordan block with the diagonals 1; see for example Campbell-Wehlau \cite[Lemma 7.1.3]{CW2011}. More precisely, $V_{1}$ is the trivial representation and $V_{4}$ is the regular one; $V_{1}$ and $V_{2}$ are not faithful meanwhile $V_{3}$ and $V_{4}$ are faithful.
As  $\af_{\F}^{n}/C_{4}$ is reduced,  $\af_{\F}^{n}$ contains no the trivial representation $V_{1}$ as direct summand. Thus there exist $a,b,c\in\N$ such that
$\af_{\F}^{n}\cong aV_{2}\oplus b V_{3}\oplus c V_{4}.$
This implies that the dimension of the image of $\sigma-1$ is $a+2b+3c$.
The fact that  $\af_{\F}^{n}/C_{4}$  is Cohen-Macaulay, together with Theorem \ref{kemper99}, forces $a+2b+3c$ to be less than or equal to 2.
Hence, $c$  must be zero and clearly, there are only three possibilities: (i) $b=1$ and $a=0$; (ii) $b=0$ and $a=1$; and (iii)
$b=0$ and $a=2$. This proves $n\leqslant 4$ and $\af_{\F}^{n}/C_{4}$ must be isomorphic to one of $\{V_{3}/C_{4},V_{2}/C_{4},
2V_{2}/C_{4}\}$.

Magma calculations show that the invariant ring $\co(V_{3})^{C_{4}}=\F[x,y,z]^{C_{4}}$ is a hypersurface, minimally generated by
$f_{1}  :=  x, f_{2}  :=   xy+y^{2}, f_{3} := x^2 yz + x^2 z^2 + xy^2 z + xyz^2 + y^2 z^2 + z^4$, and $h := x^2 z + xy^2 + xz^2 + y^3
$ subject to the unique relation: $f_{1}^2 f_{3} + f_{1}f_{2}h + f_{2}^3 + h^2=0.$ Hence
$\co(V_{3}/C_{4})\cong \F[x_{1},\dots,x_{4}]/(f)$ where $f:=x_{1}^2 x_{3} + x_{1}x_{2}x_{4} + x_{2}^3 + x_{4}^2.$
Note that characteristic of $\F$ is 2 and $\sing_{C_{4}}(V_{3})=\{e\in \af_{\F}^{4}\mid f(e)=0\textrm{ and } \rank(\frac{\partial f}{\partial x_{1}}(e),\dots,
\frac{\partial f}{\partial x_{4}}(e))=0\}$. A direct calculation shows that
$\sing_{C_{4}}(V_{3})=\{(0,0,a,0)\in \af_{\F}^{4}\mid a\in\F\} \cong \af_{\F}^{1}$.
Similarly, as $V_{2}$ is not a faithful representation of $C_{4}$ and the image of $C_{4}$ on $V_{2}$ is actually isomorphic to the image of $C_{2}$ acting faithfully on $V_{2}$. Thus  $\co(V_{2}/C_{4})=\co(V_{2}/C_{2})$ and
$\co(2V_{2}/C_{4})=\co(2V_{2}/C_{2})$. These two invariant rings have already been calculated  in Campbell-Wehlau \cite[Theorem 1.11.2]{CW2011}
and \cite[Theorem 1.12.1]{CW2011} respectively. Indeed, $\co(V_{2}/C_{4})=\F[x,y]^{C_{2}}=\F[x,xy+y^{2}]$ is a polynomial algebra
and $\co(2V_{2}/C_{4})=\F[x_{1},y_{1},x_{2},y_{2}]^{C_{2}}$ is a hypersurface, minimally generated by
$\{x_{i},N_{i}:=x_{i}y_{i}+y_{i}^{2},u:=x_{1}y_{2}+x_{2}y_{1}\mid i=1,2\},$
subject to the unique relation: $u^{2}=x_{1}^{2}N_{2}+x_{2}^{2}N_{1}+x_{1}x_{2}u.$
This yields that $\af_{\F}^{n}/C_{4}$ is always a hypersurface. Clearly, $V_{2}/C_{4}$ is smooth and
a direct calculation similar with the case $V_{3}$ applies for the case $2V_{2}$. Eventually we derive
$\sing_{C_{4}}(2V_{2})\cong \af_{\F}^{2}$. This completes the proof.
\end{proof}

To accomplish the proof of Theorem \ref{thm1.3},  we suppose char$(\F)=p>2$ throughout the rest of this section. Then $C_{2p}$ is isomorphic to the direct product of $C_{2}$ and
$C_{p}$.
We have seen in Alperin  \cite[pages 24--25]{Alp1993} that up to equivalence, there are only $2p$ indecomposable modular representations $\{V_{k}^{\pm} \mid 1\leqslant k\leqslant p\}$ of $C_{2p}$, where $\dim(V_{k}^{\pm})=k$ for all $k$. More precisely, for $1\leqslant k\leqslant p$, $V_{k}^+$ corresponds to the group homomorphism carrying $\sigma$ to $T^+_k$, the $k\times k$ Jordan block with diagonals 1; and $V_{k}^-$ corresponds to the group homomorphism carrying $\sigma$ to $T^-_k$, the $k\times k$ Jordan block  with diagonals $-1$.
Note that the orders of $T^+_k$ and $T^-_k$
are $p$ and $2p$ respectively for $k\geq 2$. Thus all $V_k^+$ are not faithful; and all $V_k^-$ are faithful except for $k=1$.
Further, $(T^-_k)^2$ and $T^+_k$ generate the same $p$-sylow subgroup $P$ of $C_{2p}$ where $|P|=p$.
Hence, the invariant ring $\F[V_k^-]^{C_{2p}}$ can be viewed to be a subring of $\F[V_k^+]^{C_{2p}}$ via $\F[V_k^-]^{C_{2p}}\subseteq \F[V_k^-]^{P}\cong\F[V_k^+]^{C_{2p}}$.

\begin{proposition}\label{prop3.1}
Suppose char$(\F)=p>2$ and $W$ is a finite-dimensional reduced modular representation of $C_{2p}$ over $\F$ such that
$\F[W]^{C_{2p}}$ is Cohen-Macaulay. If $W$ is faithful, then it must be isomorphic to one of $\{V_2^+\oplus b V_1^-, 2V_2^+\oplus b V_1^-, V_3^+\oplus b V_1^-, b_1 V_1^-\oplus V_2^-, b_1 V_1^-\oplus 2V_2^-, b_1 V_1^-\oplus V_3^-, V_2^+\oplus V_2^-\oplus b_{1}V_1^-\}$ where $b\in\N^+$ and $b_1\in \N$; if $W$ is not faithful,  then it must be isomorphic to one of $\{b V_1^-, V_2^+, 2V_2^+, V_3^+\}$
where $b\in\N^+$.
\end{proposition}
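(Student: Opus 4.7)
The plan is to convert the Cohen-Macaulay hypothesis into a purely numerical condition via Theorem \ref{ES}, and then enumerate the decompositions of $W$ satisfying that condition together with the reducedness and faithfulness constraints. Since $p$ is odd we have $C_{2p}\cong C_{2}\times C_{p}$, and the Sylow $p$-subgroup of $C_{2p}$ is $P=C_{p}$. Applying Theorem \ref{ES} to $G=C_{2p}$ reduces the Cohen-Macaulay condition for $\F[W]^{C_{2p}}$ to the inequality $\textrm{codim}(W^{P})\leq 2$. Since $W\mapsto W^{P}$ is additive on direct sums, this amounts to bounding the total codimension contributed by each indecomposable summand of $W$.

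First I would compute $\dim (V_{k}^{\pm})^{P}$ for each indecomposable. On $V_{1}^{\pm}$, the subgroup $P$ acts trivially (since these characters factor through $C_{2}$), so the codimension contribution is $0$. For $k\geq 2$, the element $\tau=\sigma^{2}$ generates $P$ and acts on $V_{k}^{+}$ as $(T_{k}^{+})^{2}=I+N(2I+N)$ and on $V_{k}^{-}$ as $(T_{k}^{-})^{2}=I+N(N-2I)$, where $N$ denotes the $k\times k$ nilpotent Jordan shift. Because $p>2$, both $2I+N$ and $N-2I$ are invertible, so $\tau-I$ has rank $k-1$ in each case and $\dim(V_{k}^{\pm})^{P}=1$. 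Writing
\[
W\cong \bigoplus_{k\geq 2}\bigl(a_{k}V_{k}^{+}\oplus b_{k}V_{k}^{-}\bigr)\oplus b_{1}V_{1}^{-}
\]
(no $V_{1}^{+}$ by reducedness), the Cohen-Macaulay condition becomes
\[
\sum_{k\geq 2}(a_{k}+b_{k})(k-1)\leq 2,
\]
which forces $a_{k}=b_{k}=0$ for $k\geq 4$ and leaves only a short list of possibilities for $(a_{2},b_{2},a_{3},b_{3})$.

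Finally I would split this short list according to faithfulness. The $C_{2p}$-kernel of $V_{1}^{-}$ is $P$, the kernel of $V_{k}^{+}$ ($k\geq 2$) is the unique order-two subgroup of $C_{2p}$, and each $V_{k}^{-}$ with $k\geq 2$ is faithful. Intersecting the kernels of the summands shows that $W$ is faithful if and only if some $b_{k}>0$ for $k\geq 2$, or else both $b_{1}>0$ and some $a_{k}>0$ for $k\geq 2$. Cross-referencing this dichotomy with the admissible choices of $(a_{k},b_{k})$ satisfying the codimension bound yields the four non-faithful families $bV_{1}^{-}$, $V_{2}^{+}$, $2V_{2}^{+}$, $V_{3}^{+}$ and the seven faithful families in the statement. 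The only genuinely non-routine step is the rank computation for $(T_{k}^{\pm})^{2}-I$, which essentially uses $p>2$; everything else is direct enumeration.
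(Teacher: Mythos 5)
Your proposal is correct and follows essentially the same route as the paper: reduce the Cohen--Macaulay hypothesis to $\mathrm{codim}(W^{P})\leq 2$ via Theorem \ref{ES}, compute the rank of $\sigma^{2}-1$ summand by summand to get $\sum_{k\geq 2}(a_{k}+b_{k})(k-1)\leq 2$, and enumerate. The only difference is that you spell out two details the paper leaves implicit --- the factorizations $(T_{k}^{\pm})^{2}-I=N(\pm 2I+N)$ justifying the rank count, and the kernel analysis behind the faithful/non-faithful split --- both of which check out.
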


\begin{proof}
As $W$ is reduced, we may suppose $a_2,\dots,a_p,b_1,\dots,b_p\in\N$ such that
$W\cong a_2V_2^+\oplus \cdots \oplus a_pV_p^+\oplus b_1V_1^-\oplus b_2V_2^-\oplus\cdots \oplus b_pV_p^-.$
Note that $C_{2p}$ is generated by $\sigma$ and $P$ is generated by $\sigma^2$, thus
$\textrm{codim}(W^P)=\dim(W)-\dim(W^P)$, where
$W^P=\{w\in W\mid \delta(w)=w,\textrm{ for all }\delta\in P\}=\{w\in W\mid \sigma^2(w)=w\}$.
Hence, $\textrm{codim}(W^P)=\rank(\sigma^2-1)$.
Since the rank of  $\sigma^2-1$ on $V_k^{\pm}$ is $k-1$, it follows that the rank of  $\sigma^2-1$ on $W$
is equal to $a_2+a_3 \cdot 2+\cdots+a_p\cdot (p-1)+b_1\cdot 0+b_2+b_3\cdot 2+\cdots+b_p\cdot (p-1)$.
Since $\F[W]^{C_{2p}}$ is Cohen-Macaulay, $\de(\F[W]^{C_{2p}})$ must be zero.  It follows Theorem \ref{ES} that $\textrm{codim}(W^P)=\rank(\sigma^2-1)\leq 2$, i.e., $$\sum_{k=2}^{p} (a_{k}+b_{k})(k-1)\leqslant 2.$$
This implies that $a_{k}=b_{k}=0$ for $k=4,5,\dots,p$; and so $a_{2}+b_{2}+2a_{3}+2b_{3}\leqslant 2.$
Clearly, $(a_{2}, b_{2}, a_{3}, b_{3})$ must be one of $$\{(1,0,0,0),(0,1,0,0), (1,1,0,0),(2,0,0,0), (0,2,0,0),(0,0,1,0),(0,0,0,1)\}.$$
Note that $b_{1}$ is arbitrary; so if $W$ is faithful, then it must be isomorphic to one of $\{V_2^+\oplus b V_1^-, 2V_2^+\oplus b V_1^-, V_3^+\oplus b V_1^-, b_1 V_1^-\oplus V_2^-, b_1 V_1^-\oplus 2V_2^-, b_1 V_1^-\oplus V_3^-, V_2^+\oplus V_2^-\oplus b_{1}V_1^-\}$ for some $b\in\N^+$ and $b_1\in \N$; if $W$ is not faithful,  then it must be isomorphic to one of $\{b V_1^-, V_2^+, 2V_2^+, V_3^+\}$ for some $b\in\N^+$.
This completes the proof.
\end{proof}

\begin{corollary}\label{coro3.2}
Let $\af_{\F}^{n}/C_{2p}$ be an $n$-dimensional, reduced  Cohen-Macaulay quotient variety  in characteristic $p>2$.
Then $\af_{\F}^{2}/C_{2p}$ is isomorphic to the quotient obtained from one of $\{V_2^+,2 V_1^-, V_2^-\}$; and $\af_{\F}^{3}/C_{2p}$ is isomorphic to the quotient obtained from one of $$\{V_2^+\oplus V_1^-,V_{3}^{+},3V_1^-,V_3^-,V_2^-\oplus V_1^-\}.$$
\end{corollary}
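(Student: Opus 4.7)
The plan is to derive this corollary directly from Proposition \ref{prop3.1} by a straightforward dimension-count enumeration. Since the hypothesis of Corollary \ref{coro3.2} is exactly the hypothesis of Proposition \ref{prop3.1} applied to $W=\af_{\F}^{n}$, every reduced Cohen-Macaulay quotient $\af_{\F}^{n}/C_{2p}$ arises from one of the seven families of faithful representations or one of the four families of non-faithful representations listed there. So the task reduces to specializing $n=2$ and $n=3$ and extracting which parameter values $(b,b_{1})$ are admissible.

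First I would treat $n=2$. Running through the seven faithful families with the dimensions $2+b$, $4+b$, $3+b$, $b_{1}+2$, $b_{1}+4$, $b_{1}+3$, $4+b_{1}$ and imposing the constraints $b\in\N^{+}$ and $b_{1}\in\N$, the only family that can hit dimension $2$ is $b_{1}V_{1}^{-}\oplus V_{2}^{-}$ with $b_{1}=0$, yielding $V_{2}^{-}$. For the four non-faithful families with dimensions $b,2,4,3$, only $bV_{1}^{-}$ with $b=2$ and $V_{2}^{+}$ alone produce dimension $2$. Combining these gives precisely the three representations $\{V_{2}^{+},\,2V_{1}^{-},\,V_{2}^{-}\}$ claimed in the first half of the corollary.

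Next I would repeat the same enumeration for $n=3$. On the faithful side, the dimension $3$ is achieved by $V_{2}^{+}\oplus V_{1}^{-}$ (taking $b=1$), by $V_{1}^{-}\oplus V_{2}^{-}$ (taking $b_{1}=1$), and by $V_{3}^{-}$ (taking $b_{1}=0$); the other four families are ruled out since $4+b$, $4+b_{1}$, $b_{1}+4$ all exceed $3$ and $V_{3}^{+}\oplus bV_{1}^{-}$ forces $b=0\notin\N^{+}$. On the non-faithful side, dimension $3$ is achieved by $bV_{1}^{-}$ with $b=3$ and by $V_{3}^{+}$, while $V_{2}^{+}$ and $2V_{2}^{+}$ have the wrong dimension. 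The union gives exactly $\{V_{2}^{+}\oplus V_{1}^{-},\,V_{3}^{+},\,3V_{1}^{-},\,V_{3}^{-},\,V_{2}^{-}\oplus V_{1}^{-}\}$, as required.

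Since the enumeration is completely mechanical once Proposition \ref{prop3.1} is in hand, there is no real obstacle; the only thing to be careful about is not confusing the regimes $b\in\N^{+}$ (which excludes the summand $V_{1}^{-}$ entirely) and $b_{1}\in\N$ (which permits $b_{1}=0$, so that e.g.\ $V_{2}^{-}$ and $V_{3}^{-}$ by themselves genuinely appear as faithful representations). One may also note as a sanity check that a representation like $V_{3}^{+}$ occurs on the non-faithful list (its kernel is the order-$2$ subgroup of $C_{2p}$) rather than on the faithful one, which is why $V_{3}^{+}\oplus bV_{1}^{-}$ requires $b\geq 1$ to become faithful and hence cannot contribute at $n=3$.
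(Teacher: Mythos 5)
Your proposal is correct and is exactly the intended argument: the paper states the corollary without proof precisely because it is the mechanical dimension-count specialization of Proposition \ref{prop3.1} to $n=2,3$ that you carry out. Your enumeration of the admissible parameter values, and your care with the distinction between $b\in\N^{+}$ and $b_{1}\in\N$, match the lists in the corollary exactly.
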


\begin{lemma}\label{lem3.3}
Let $V_{1}$ be the one-dimensional irreducible nontrivial representation of $C_{2}$ over $\F$ of characteristic $p>2$.
Then $\F[3V_{1}]^{C_{2}}=\F[x,y,z]^{C_{2}}$ is Cohen-Macaulay but not Gorenstein, minimally generated by $\{x^{2},y^{2},z^{2},xy,xz,yz\}$. Moreover,   $\F[3V_{1}]^{C_{2}}$ is isomorphic to the quotient algebra $\F[x_{1},x_{2},\dots,x_{6}]/(r_{1},r_{2},\dots,r_{6})$ where
  $r_{1}=x_{4}^{2}-x_{1}x_{2}, r_{2}=x_{5}^{2}-x_{1}x_{3}, r_{3}=x_{6}^{2}-x_{2}x_{3}, r_{4}=x_{4}x_{5}-x_{1}x_{6},
  r_{5}=x_{4}x_{6}-x_{2}x_{5}$ and $r_{6}=x_{5}x_{6}-x_{3}x_{4}$.
\end{lemma}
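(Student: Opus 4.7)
The plan is to proceed in three stages: identify the generators, settle the Cohen--Macaulay/non-Gorenstein dichotomy, and then establish the explicit presentation.

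First I would exploit that $V_{1}$ is the sign representation of $C_{2}$ over $\F$ (recall $\cha(\F) = p > 2$), so a generator $\sigma$ of $C_{2}$ acts on $\F[x,y,z]$ by negating each coordinate. The invariant subring is therefore the $\F$-span of all monomials of even total degree in $x,y,z$, and this is generated as an $\F$-algebra by the six quadratic monomials $x^{2}, y^{2}, z^{2}, xy, xz, yz$. Minimality is immediate because $\F[3V_{1}]^{C_{2}}$ carries no invariants in degree $1$ and the six displayed monomials are linearly independent in degree $2$.

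Next I would settle the ring-theoretic assertions. Since $|C_{2}| = 2$ is coprime to $p$, the action is nonmodular, so Hochster--Eagon gives Cohen--Macaulayness. For the failure of Gorensteinness I would invoke Watanabe's theorem: the scalar $-I \in \GL_{3}(\F)$ is not a pseudoreflection (the operator $-I - \mathrm{id}$ has rank $3$, not $1$), so $C_{2} = \{\pm I\}$ is a small subgroup; and since $\det(-I) = (-1)^{3} = -1 \neq 1$, we have $C_{2} \not\subseteq \SL_{3}(\F)$, which forces $\F[3V_{1}]^{C_{2}}$ to be non-Gorenstein.

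Finally I would establish the presentation via the graded surjection $\phi\colon \F[x_{1},\dots,x_{6}] \to \F[3V_{1}]^{C_{2}}$ sending $x_{1},\dots,x_{6}$ to $x^{2}, y^{2}, z^{2}, xy, xz, yz$ in the stated order. Direct substitution verifies each $r_{i} \in \ker(\phi)$. The $r_{i}$ are precisely the $2\times 2$ minors of the generic symmetric matrix $\left(\begin{smallmatrix} x_{1} & x_{4} & x_{5}\\ x_{4} & x_{2} & x_{6}\\ x_{5} & x_{6} & x_{3}\end{smallmatrix}\right)$, whose ideal is the classical defining prime ideal of the second Veronese embedding of $\mathbb{P}^{2}$ into $\mathbb{P}^{5}$, of height $3$. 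Because $\F[3V_{1}]^{C_{2}}$ is a graded domain of Krull dimension $3$ and $\phi$ is a graded homomorphism between graded domains of the same dimension, we conclude $\ker(\phi) = (r_{1}, \dots, r_{6})$.

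The main obstacle is precisely this last identification of $\ker(\phi)$ with the ideal of minors. A fully self-contained verification can either cite the classical Veronese presentation or proceed by Gröbner basis: under a lex order having $x_{4}, x_{5}, x_{6}$ as leading variables, one checks that the $r_{i}$ form a Gröbner basis and that the resulting standard-monomial count matches Molien's series $\tfrac{1}{2}\bigl((1-t)^{-3} + (1+t)^{-3}\bigr)$ for $\F[3V_{1}]^{C_{2}}$, suitably regraded so that each quadratic generator has degree $1$.
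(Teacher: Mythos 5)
Your proposal is correct, and for the first two assertions it follows the paper's own route: the paper likewise observes there are no degree-one invariants, deduces that the six quadratic monomials minimally generate (using Noether's bound to cap the generating degree at $|C_{2}|=2$, where you instead note directly that every even monomial factors into quadratics --- both are fine), and then cites Hochster--Eagon for Cohen--Macaulayness and Watanabe for the failure of Gorensteinness exactly as you do. The genuine divergence is in the presentation: the paper disposes of the second statement with the single sentence ``Magma's calculation proves the second statement,'' whereas you supply an actual argument, identifying $r_{1},\dots,r_{6}$ (up to sign) with the six $2\times 2$ minors of the generic symmetric $3\times 3$ matrix, i.e.\ with the classical prime defining ideal of the quadratic Veronese surface in $\mathbb{P}^{5}$, of height $3$; since $\ker(\phi)$ is a prime of height $6-3=3$ containing that prime, the two coincide. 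This buys a computer-free, characteristic-independent (for $p>2$) proof at the cost of importing the classical fact that the determinantal ideal of the rank-one symmetric locus is prime of the expected height --- a fact you would need to either cite or verify via the Gr\"obner-basis/Hilbert-series comparison you sketch, where the Molien series $\tfrac{1}{2}\bigl((1-t)^{-3}+(1+t)^{-3}\bigr)$ is the right benchmark. Either way the argument closes, so your version is a strict strengthening of the paper's treatment of this step.
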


\begin{proof}
Note that the generator of $C_{2}$ sends $x,y$ and $z$ to $-x,-y$ and $-z$ respectively. Thus there are no invariants of degree 1.
By Noether's bound theorem, $\F[3V_{1}]^{C_{2}}$ can be generated by invariants of degree 2.
Since $x^{2},y^{2},z^{2},xy,xz,yz$ forms a basis of $\F[3V_{1}]_{d}$ and all are invariants, they minimally generate
 $\F[3V_{1}]^{C_{2}}$. By Hochster-Eagon \cite[Proposition 13]{HE1971} and Watanabe  \cite[Theorem 1]{Wat1974b} we see that
 $\F[3V_{1}]^{C_{2}}$ is Cohen-Macaulay but not Gorenstein. Magma's calculation proves the second statement.
\end{proof}

\begin{proof}[Proof of Theorem \ref{thm1.3}]
We first consider the two-dimensional case, assuming $\F[V_2^+]=\F[2 V_1^-]=\F[V_2^-]=\F[x,y]$. Since the image of $C_{2p}$ on $V_{2}^{+}$ is $P$, it follows that
$\F[V_{2}^{+}]^{C_{2p}}=\F[x,y]^{P}=\F[x,N(y)]$ is a polynomial algebra, where $N(y):=y^{p}-yx^{p-1}$.
Hence, $V_{2}^{+}/C_{2p}$ is smooth. We observe that $\F[2V_{1}^{-}]^{C_{2p}}=\F[x^{2},xy,y^{2}]\cong \F[z_{1},z_{2},z_{3}]/(z_{2}^{2}-z_{1}z_{3})$ and thus $\sing_{C_{2p}}(2V_{1}^{-})$ consists of the zero point in $\af_{\F}^{3}$.
Moreover, $\F[V_2^-]^{C_{2p}}=(\F[V_2^-]^{P})^{C_{2p}/P}\cong\F[x,N(y)]^{C_{2}}$ where the generator of $C_{2}$ sends
$x$ to $-x$ and $N(y)$ to $-N(y)$ respectively. Thus $\F[V_2^-]^{C_{2p}}=\F[x^{2},x\cdot N(y),N(y)^{2}]\cong \F[2V_{1}^{-}]^{C_{2p}}$. Hence, $\sing_{C_{2p}}(V_{2}^{-})$ also consists of the zero point in $\af_{\F}^{3}$. This proves the first statement.

For the three-dimensional case we suppose $\{x,y,z\}$ is the dual basis for the representation.
By Kemper \cite[Proposition 16]{Kem1996} we see that $\F[V_2^+\oplus V_1^-]^{C_{2p}}=\F[x,N(y),z^{2}]$.
Note that $\F[V_{3}^{+}]^{C_{2p}}=\F[V_{3}^{+}]^{C_{p}}$ and the latter has been calculated by Campbell-Wehlau
\cite[Section 4.10]{CW2011}. Thus $\F[V_{3}^{+}]^{C_{2p}}$ is a hypersurface, generated by $\{x,d,N^{P}(y),N^{P}(z)\}$
where $d=y^{2}-2xz-xy$ and $N^{P}(*)$ denotes the norm of $*$. As $\F[3V_{1}^{-}]^{C_{2p}}\cong \F[3V_{1}]^{C_{2}}$, it follows from Lemma \ref{lem3.3} that $\F[3V_{1}^{-}]^{C_{2p}}$ is Cohen-Macaulay but not Gorenstein, minimally generated by six invariants subject to six relations among these generators.
To calculate $\F[V_{3}^{-}]^{C_{2p}}$, we consider the subgroup $P$ of $C_{2p}$ and the quotient group $C_{2p}/P\cong C_{2}$. We have seen that $\F[V_{3}^{-}]^{P}=\F[V_{3}^{+}]^{C_{2p}}=\F[x,d,N^{P}(y),N^{P}(z)]$. Since
the generator of $C_{2}$ fixes $d$, and sends $x, N^{P}(y),N^{P}(z)$ to $-x, -N^{P}(y),-N^{P}(z)$ respectively, there exists a natural $C_{2}$-equivalent algebra surjection:
$$\rho:\F[x_{1},x_{2},x_{3},x_{4}]\ra \F[x,N^{P}(y),N^{P}(z),d]=\F[V_{3}^{-}]^{P}$$
carrying $x_{1}$ to $x$, $x_{2}$ to $N^{P}(y)$, $x_{3}$ to $N^{P}(z)$, and $x_{4}$ to $d$. Here
$\F[x_{1},x_{2},x_{3},x_{4}]^{C_{2}}\cong \F[3V_{1}]^{C_{2}}\otimes_{\F}\F[x_{4}]$ and we take $x_{1},x_{2},x_{3}$
as a basis of $(3V_{1})^{*}$ dual to the standard basis of $3V_{1}$. Since $C_{2}$ is a linearly reductive,
$\rho$ restricts to the following $\F$-algebra surjection:
$$\rho^{C_{2}}:\F[x_{1},x_{2},x_{3},x_{4}]^{C_{2}}\ra \F[x,N^{P}(y),N^{P}(z),d]^{C_{2}}=(\F[V_{3}^{-}]^{P})^{C_{2}}\cong
\F[V_{3}^{-}]^{C_{2p}}.$$
By Lemma \ref{lem3.3}, we see that $\F[V_{3}^{-}]^{C_{2p}}$ is generated by
$$\{d,x^{2},N^{P}(y)^{2},N^{P}(z)^{2}, xN^{P}(y),xN^{P}(z), N^{P}(y)\cdot N^{P}(z)\}.$$
Recall that $N^{P}(y)^{2}\in \F[x,d,N^{P}(z)]\oplus  N^{P}(y)\cdot \F[x,d,N^{P}(z)]$; see Campbell-Wehlau \cite[page 77]{CW2011}.
Hence, $\F[V_{3}^{-}]^{C_{2p}}$ is minimally generated by
$\{d,x^{2},N^{P}(z)^{2}, xN^{P}(y),xN^{P}(z), N^{P}(y)\cdot N^{P}(z)\}.$
Similar argument applies to the last subcase. Note that $\F[V_2^-\oplus V_1^-]^{P}=\F[x,N(y),z]$
and the generator of $C_{2p}/P$ carries $x,N(y),z$ to $-x,-N(y),-z$ respectively. Hence,
$\F[V_2^-\oplus V_1^-]^{C_{2p}}$ is minimally generated by $\{x^{2},N(y)^{2},z^{2},x\cdot N(y),xz,z\cdot N(y)\}$. Therefore, Smith
\cite[Theorem 1.2]{Smi1996}, together with Watanabe \cite[Theorem 1]{Wat1974b}, implies that $\F[V_{3}^{-}]^{C_{2p}}$ and $\F[V_2^-\oplus V_1^-]^{C_{2p}} (\cong \F[3V_{1}]^{C_{2}})$ are Cohen-Macaulay but not Gorenstein.
\end{proof}

We currently are unable  to characterize all singularities $\sing_{C_{2p}}(V_{3}^{-})$ even for the case $p=3$ but we can find out some partial subset of singularities  as the following example shows. 

\begin{example}{\rm Let $p=3$ and $v\in \sing_{C_{6}}(V_{3}^{-})$. Magma's calculation shows that the invariant ring $\F[V_{3}^{-}]^{C_{6}}=\F[x,y,z]^{C_{6}}$ is minimally generated by the primary invariants
$f_{1}:=x^2, f_{2}:=xy + xz + y^2, f_{3}:= x^2 y^2 z^2 + x^2 y z^3 + x^2 z^4 + 2x y^3 z^2 + xy^2 z^3 + xyz^4 +
        2xz^5 + y^4z^2 + y^2z^4 + z^6$ and the secondary invariants
   $ h_{1}:=x^3 z + x^2 y^2 + xy^3, h_{2}:=x^2 yz + 2x^2 z^2 + xy^2 z + 2xz^3,
   h_{3}:= x^4 yz + 2x^4 z^2 + x^3 y^2 z + x^3 y z^2 + x^3 z^3 + x^2y^3z +
        2x^2 z^4 + 2xy^4 z + 2xy^3 z^2 + 2xy^2 z^3 + y^5 z + 2y^3z^3$, subject to the following relations:
$r_{1}:=f_{1}^2 h_{2} + f_{1}f_{2}^3 + 2f_{1}f_{2}h_{1} + 2h_{1}^2,
   r_{2}:= 2f_{1}^2h_{2} + f_{1}h_{3} + 2h_{1}h_{2},
    r_{3}:=f_{1}f_{3} + 2h_{2}^2,
    r_{4}:=2f_{1}^3h_{2} + f_{1}^2f_{2}h_{2} + f_{1}^2 f_{3} + f_{2}^2h_{3} + 2f_{1}f_{2}h_{3} + f_{2}^3h_{2} + 2h_{1}h_{3},
    r_{5}:=f_{1}^2f_{3} + f_{3}h_{1} + 2h_{2}h_{3},
    r_{6}:=f_{1}^3 f_{3} + 2f_{1}f_{3}h_{1} + f_{1}f_{3}h_{2} + f_{2}^3f_{3} + 2f_{2}f_{3}h_{1} + 2h_{3}^2.$
    Thus there exists an $\F$-algebra isomorphism from $\F[x_{1},x_{2},x_{3},y_{1},y_{2},y_{3}]/(R_{1},\dots,R_{6})$ to $\F[V_{3}^{-}]^{C_{6}}$, which carries $x_{i}$ to $f_{i}$, $y_{i}$ to $h_{i}$, and $R_{j}$ to $r_{j}$, respectively.
    
We may assume that $v=(a_{1},\dots,a_{6})$ for all $a_{i}\in \F$, and suppose $a_{1}=0$. As $r_{1}(v)=r_{3}(v)=0$, thus $a_{4}=a_{5}=0$. If $a_{2}=0$, then the fact that $r_{4}(v)=0$ implies $a_{6}=0$; if $a_{2}\neq 0$, then the fact that $r_{6}(v)=0$ does also imply $a_{6}=0$. Thus $v=(0,a_{2},a_{3},0,0,0)$. Since the rank of the Jacobian matrix of the all $r_{i}$ at $v$ is not equal to
3, we see that $a_{2}=0$. Thus $\{v=(0,0,a,0,0,0)\mid a\in \F\}$ is a subset of singularities. 
$\hbo$}\end{example}

We close this section by proving the following result.

\begin{proposition}\label{prop3.5}
For $k\geq 4$, $V_k^{\pm}/C_{2p}$ are not Cohen-Macaulay.
\end{proposition}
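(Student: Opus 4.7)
The plan is to invoke Theorem \ref{ES} directly. Since $C_{2p}$ is cyclic with Sylow $p$-subgroup $P=\langle \sigma^{2}\rangle$, for any finite-dimensional modular representation $W$ the theorem asserts $\de(\F[W]^{C_{2p}})=\max\{\textrm{codim}(W^{P})-2,\,0\}$. Thus to prove that $V_k^{\pm}/C_{2p}$ is not Cohen-Macaulay it suffices to show $\textrm{codim}((V_k^{\pm})^{P})\geqslant 3$ for $k\geqslant 4$, and since $(V_k^{\pm})^{P}=\ker(\sigma^{2}-1)$, the problem reduces to computing the rank of $\sigma^{2}-1$ on each of $V_k^{+}$ and $V_k^{-}$.

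For $V_k^{+}$ the matrix of $\sigma$ is $T_k^{+}=I+N$, where $N$ is the standard shift matrix of rank $k-1$. Factoring
$$(T_k^{+})^{2}-I=(T_k^{+}-I)(T_k^{+}+I)=N(2I+N),$$
and noting that $2I+N$ is upper triangular with diagonal entries $2\neq 0$ in characteristic $p>2$, hence invertible, I obtain $\textrm{rank}((T_k^{+})^{2}-I)=\textrm{rank}(N)=k-1$. For $V_k^{-}$ the matrix of $\sigma$ is $T_k^{-}=-I+N$, and a direct expansion gives
$$(T_k^{-})^{2}-I=-2N+N^{2}=N(-2I+N);$$
again $-2I+N$ is upper triangular with diagonal entries $-2\neq 0$ and hence invertible, so $\textrm{rank}((T_k^{-})^{2}-I)=k-1$ as well. (This is consistent with the paper's earlier observation that $(T_k^{-})^{2}$ and $T_k^{+}$ generate the same $P$, since both give a single $k\times k$ Jordan block after a change of basis.)

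Combining these computations, $\textrm{codim}((V_k^{\pm})^{P})=k-1\geqslant 3$ whenever $k\geqslant 4$, so Theorem \ref{ES} yields $\de(\F[V_k^{\pm}]^{C_{2p}})=k-3\geqslant 1$. Consequently $\F[V_k^{\pm}]^{C_{2p}}$ fails to be Cohen-Macaulay, which establishes the proposition. There is no serious obstacle in this argument; the only point deserving care is that the hypothesis $p>2$ is essential, since in characteristic $2$ the factors $2I+N$ and $-2I+N$ become nilpotent and the rank computation collapses—precisely the reason why the $p=2$ case has to be treated separately, as in Theorem \ref{thm1.2}.
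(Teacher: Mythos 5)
Your proof is correct, but it takes a genuinely different and considerably shorter route than the paper's. You invoke Theorem \ref{ES} directly: since $C_{2p}$ is cyclic, that theorem gives the exact value $\de(\F[V_k^{\pm}]^{C_{2p}})=\max\{\mathrm{codim}((V_k^{\pm})^{P})-2,0\}$, and your factorizations $(T_k^{\pm})^{2}-I=N(\pm 2I+N)$ with $\pm 2I+N$ invertible (because $p>2$) correctly give $\mathrm{codim}((V_k^{\pm})^{P})=k-1\geqslant 3$ for $k\geqslant 4$, hence defect $k-3>0$. This is in fact the very computation the paper already performs inside the proof of Proposition \ref{prop3.1}, where the rank of $\sigma^{2}-1$ on $V_k^{\pm}$ is recorded as $k-1$; so on your reading Proposition \ref{prop3.5} is an immediate corollary of that argument. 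The paper instead treats the two signs separately: for $V_k^{+}$ it uses only Kemper's bi-reflection criterion (Theorem \ref{kemper99}) applied to the $p$-group $P$, via $\F[V_k^{+}]^{C_{2p}}=\F[V_k^{+}]^{P}$; for $V_k^{-}$, where that criterion does not apply because $C_{2p}$ is not a $p$-group, it builds an explicit partial homogeneous system of parameters $\{\widetilde{\ell_1},\ell_2,\widetilde{N}\}$ and shows by a leading-monomial (SAGBI-type) argument, using Shank's generators, that it fails to be a regular sequence. What the paper's longer argument buys is an explicit witness to the failure of Cohen--Macaulayness and independence from the full strength of the equality in Theorem \ref{ES}; what your argument buys is brevity and a uniform treatment of both $V_k^{+}$ and $V_k^{-}$. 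Your closing observation that the computation collapses in characteristic $2$, which is why the $C_4$ case is handled separately in Theorem \ref{thm1.2}, is also apt.
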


\begin{lemma}\label{lem3.6}
If $k\geq 2$ and $f\in \F[V_k^+]^{C_{2p}}$ is any homogenous polynomial of positive degree, then
$T^-_k(f)=(-1)^{\deg(f)}f$.
\end{lemma}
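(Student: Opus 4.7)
The plan is to exploit the simple matrix identity $T_k^- = -T_k^+$, which holds because replacing every nonzero diagonal entry of the Jordan block $T_k^+$ by $-1$ is the same as multiplying the whole matrix by the scalar $-I$. Conceptually, this reflects the isomorphism of $C_{2p}$-representations $V_k^- \cong V_k^+\otimes V_1^-$: under the basis inherited from $V_k^+$, the matrix of $\sigma$ on the tensor product picks up the character $\chi(\sigma)=-1$ of $V_1^-$, which is precisely the passage from $T_k^+$ to $-T_k^+$.

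First I would write $T_k^- = (-I)\cdot T_k^+$ as matrices in $\GL_k(\F)$. Since $-I$ is central in $\GL_k(\F)$, it commutes with $T_k^+$, and consequently the induced ring automorphisms of $\F[V_k^+]$ also commute. The action of $-I$ on $\F[V_k^+]=\F[x_1,\dots,x_k]$ sends each variable $x_i$ to $-x_i$, so on any homogeneous polynomial of degree $d$ it acts as multiplication by $(-1)^d$.

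Finally, given $f \in \F[V_k^+]^{C_{2p}}$ homogeneous of positive degree $d$, the hypothesis together with $\sigma\mapsto T_k^+$ on $V_k^+$ forces $T_k^+(f)=f$. Composing the two factored actions then yields
$$T_k^-(f) \;=\; \bigl((-I)\cdot T_k^+\bigr)(f) \;=\; (-I)\bigl(T_k^+(f)\bigr) \;=\; (-I)(f) \;=\; (-1)^d f,$$
which is the desired identity. There is no substantive obstacle: once one unpacks the notation and recognizes $T_k^- = -T_k^+$, the rest is a one-line computation based on the commutativity of $-I$ with every matrix and the elementary action of $-I$ on homogeneous polynomials.
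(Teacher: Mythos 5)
Your proof is correct and is essentially identical to the paper's: the paper also factors $T_k^-=\alpha\circ T_k^+$ with $\alpha=\mathrm{diag}\{-1,\dots,-1\}=-I$, notes that $\alpha$ multiplies a homogeneous polynomial of degree $d$ by $(-1)^d$, and uses $T_k^+(f)=f$ for $f\in\F[V_k^+]^{C_{2p}}$. Your added remark identifying this with the isomorphism $V_k^-\cong V_k^+\otimes V_1^-$ is a nice conceptual gloss but does not change the argument.
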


\begin{proof}
 Let $\alpha=\textrm{diag}\{-1,-1,\dots,-1\}$ be the $k\times k$ diagonal  matrix. Then
 $T^-_k=\alpha\circ T^+_k$. Note that $\alpha(x_i)=-x_i$ for $1\leq i\leq k$. Thus
 $\alpha(x_1^{a_1}\cdots x_k^{a_k})=(-1)^{a_1+\dots+a_k}x_1^{a_1}\cdots x_k^{a_k}$ for any monomial $x_1^{a_1}\cdots x_k^{a_k}$. As $f$ is homogenous, $T^-_k(f)=(\alpha\circ T^+_k)\cdot f=\alpha(f)=(-1)^{\deg(f)}f$.
\end{proof}

\begin{proof}[Proof of Proposition \ref{prop3.5}]
We suppose  $\F[V_k^+]\cong\F[x_1,x_2,\dots,x_k]\cong \F[V_k^-]$. Note that $\F[V_k^+]^{C_{2p}}=\F[V_k^+]^{P}$, where $P$ is the cyclic group of order $p$, generated by $T_k^+$. Since the dimension of the image of $T_k^+-1$ is equal to $k-1$, it follows from Theorem \ref{kemper99} that $\F[V_k^+]^{P}$ is not Cohen-Macaulay for $k\geq 4$. Hence, $V_k^{+}/C_{2p}$ is not Cohen-Macaulay.

To show $V_k^{-}/C_{2p}$ is not Cohen-Macaulay, we let
$\ell_1:=x_1,\ell_2:=x_2^2-x_1(x_2+2x_3)$, $\ell_3:=x_2^3+x_1^2(3x_4-x_2)-3x_1x_2x_3$, and $N(x_2):=x_2^p-x_1^{p-1}x_2$. By Shank \cite[Theorem 4.1 and Section 2]{Sha1998}, we see that $\ell_1,\ell_2,\ell_3,N(x_{2})\in \F[V_k^+]^{P}$ and they are algebraically independent over $\F$.
Thus $\{\ell_1,\ell_2,N(x_2)\}$ is a partial homogenous system of parameters for $\F[V_k^+]^{C_{2p}}$. Moreover, we \textit{claim} that $\{\ell_1,\ell_2,N(x_2)\}$ is not a regular sequence in $\F[V_k^+]^{C_{2p}}$. In fact,  note that $p>2$ and $\ell_3\cdot N(x_2)-\ell_2^{\frac{p+3}{2}}\in (\ell_1)\F[V_k^+].$ We may assume that
$\ell_3\cdot N(x_2)-\ell_2^{\frac{p+3}{2}}=\ell_1\cdot\ell$ for some $\ell\in \F[V_k^+]$ with $\deg(\ell)>0$.
Thus $\ell\in \F(V_k^+)^{C_{2p}}\cap \F[V_k^+]=\F[V_k^+]^{C_{2p}}$, where $\F(V_k^+)^{C_{2p}}$ denotes the invariant field. This means $\ell_3\cdot N(x_2)-\ell_2^{\frac{p+3}{2}}\in (\ell_1)\F[V_k^+]^{C_{2p}}$, and
\begin{equation}\label{e3.1}
\ell_3\cdot N(x_2)\in (\ell_1,\ell_2)\F[V_k^+]^{C_{2p}}.
\end{equation}
To accomplish the proof of the claim, we need to show that $\ell_{3}$ is not in the ideal $(\ell_1,\ell_2)\F[V_k^+]^{C_{2p}}$, i.e., the image of $\ell_{3}$ in the quotient ring
$\F[V_k^+]^{C_{2p}}/(\ell_1,\ell_2)\F[V_k^+]^{C_{2p}}$ is not zero, which together with  (\ref{e3.1}) will imply that  the image of $N(x_{2})$
in $\F[V_k^+]^{C_{2p}}/(\ell_1,\ell_2)\F[V_k^+]^{C_{2p}}$ is a zerodivisor.
We use the graded lexicographic monomial order with $x_{2}>x_{1}>x_{3}>x_{4}>\cdots>x_{k}$ and use $\LM(f)$ to denote the leading monomial of a polynomial $f$. Assume by way of contradiction that $\ell_{3}=\ell_{1}f_{1}+\ell_{2}f_{2}$ for some $f_{1},f_{2}\in\F[V_k^+]^{C_{2p}}$.
Since $\LM(\ell_{3})=x_{2}^{3}>\LM(\ell_{1}f_{1})$, we have $\LM(\ell_{3})=\LM(\ell_{2}f_{2})$. Since $\LM(\ell_2)=x_2^2$, we have $\LM(f_{2})=x_{2}$.
However, an immediate fact is that  there is no element of  $\F[V_k^+]^{C_{2p}}$ with this lead monomial.
Therefore,
 $\{\ell_1,\ell_2,N(x_{2})\}$ is not a regular sequence in $\F[V_k^+]^{C_{2p}}$.

Now we are ready to show that $V_k^{-}/C_{2p}$ is not Cohen-Macaulay.
We define $ \widetilde{\ell_1}:=\ell_1^2, \widetilde{\ell_3}:=\ell_{1}\cdot\ell_3$ and $\widetilde{N}:=\ell_{1}\cdot N(x_{2})$.
By Lemma \ref{lem3.6} we see that $\widetilde{\ell_1},\ell_{2},\widetilde{\ell_3},\widetilde{N}\in \F[V_k^-]^{C_{2p}}$.
As $\widetilde{\ell_1},\ell_2,\widetilde{N}$ are algebraically independent over $\F$, they could be
a part of a homogenous system of parameters for $\F[V_k^-]^{C_{2p}}$. To show $\F[V_k^-]^{C_{2p}}$
is not Cohen-Macaulay, it suffices to show that $\{\widetilde{\ell_1},\ell_2,\widetilde{N}\}$  is not a regular sequence in
$\F[V_k^-]^{C_{2p}}$. We have seen that $\ell_3\cdot N(x_2)-\ell_2^{\frac{p+3}{2}}\in (\ell_1)\F[V_k^+]^{C_{2p}}$, so
$$\widetilde{\ell_3}\cdot \widetilde{N}-\ell_2^{\frac{p+3}{2}}\cdot\widetilde{\ell_{1}}\in (\widetilde{\ell_1})\F[V_k^-]^{C_{2p}}.$$
Namely,
$
\widetilde{\ell_3}\cdot \widetilde{N}\in (\widetilde{\ell_1},\ell_2)\F[V_k^-]^{C_{2p}}.
$
As in the previous paragraph, it is sufficient to show that  $\widetilde{\ell_{3}}$ is not in the ideal $(\widetilde{\ell_1},\ell_2)\F[V_k^-]^{C_{2p}}$.
Assume by way of contradiction that $\widetilde{\ell_{3}}=\widetilde{\ell_1}\cdot g_{1}+\ell_2\cdot g_{2}$ for $g_{1},g_{2}\in \F[V_k^-]^{C_{2p}}$. Note that $\F[V_k^-]^{C_{2p}}\subseteq \F[V_k^-]^{C_{p}}\cong\F[V_k^+]^{C_{p}}=\F[V_k^+]^{C_{2p}}$, so we could work in $\F[V_k^+]^{C_{p}}$, which is factorial; see for example Campbell-Wehlau \cite[Theorem 3.8.1]{CW2011}. Since $\ell_{1}\cdot \ell_{3}=\ell_{1}\cdot\ell_{1}\cdot g_{1}+\ell_{2}\cdot g_{2}$, it follows that $\ell_{1}$ divides $\ell_{2}\cdot g_{2}$.
As $\ell_{1}$ does not divide $\ell_{2}$, we see that $g_{2}$ must be divisible by $\ell_{1}$.
Thus $\ell_{3}=\ell_{1}\cdot g_{1}+\ell_{2}\cdot g_{2}'$ for some $g_{2}'\in \F[V_k^+]^{P}=\F[V_k^+]^{C_{2p}}$.
This contradicts with the fact that $\ell_{3}$ is not in the ideal $(\ell_1,\ell_2)\F[V_k^+]^{C_{2p}}$.
Therefore,  $\F[V_k^-]^{C_{2p}}$ is not Cohen-Macaulay, and the proof is completed.
\end{proof}

\section{\scshape Modular Quotient Varieties of $C_{2p}(p>2)$ in Characteristic 2}

In this section we suppose $p>2$, char$(\F)=2$, and $C_{2p}$ is generated by $\sigma$.
It is well-known that up to equivalence, there are only $2p$ indecomposable modular representations $\{V_{i},W_{i}\mid 0\leqslant i\leqslant p-1\}$ of $C_{2p}$, where $\dim(V_{i})=1$ and $\dim(W_{i})=2$ for all $i$. Moreover, suppose $\{1=\xi^{0},\xi,\xi^{2},\dots,\xi^{p-1}\}$ denotes the set of all roots of the polynomial $X^{p}-1=0$ in $\F$. Then  as a $C_{2p}$-representation, $V_{i}$ corresponds to the group homomorphism defined by $\sigma\mapsto \xi^{i}$. Note that $V_{0}$ is the trivial representation and these $V_{i}$ are exactly all irreducible representations of $C_{2p}$ over $\F$. For $0\leqslant i\leqslant p-1$, $W_{i}$ corresponds to the group homomorphism defined by $$\sigma\mapsto \begin{pmatrix}
   \xi^{i}   &    0\\
      1&  \xi^{i}
\end{pmatrix};$$
see Alperin \cite[pages 24--25]{Alp1993} for details.

\begin{proposition}\label{prop4.1}
Let $p>\cha(\F)=2$ be a prime and $\af_{\F}^{n}/C_{2p}$ be a reduced Cohen-Macaulay modular quotient variety.
Then there exist $a_{1},\dots,a_{p-1}\in\N$ and $b,c\in\{0,1\}$ such that
$$\af_{\F}^{n}/C_{2p}\cong (\oplus_{i=1}^{p-1} a_{i}V_{i}\oplus bW_{k}\oplus cW_{s})/C_{2p}$$
where $0\leqslant k,s\leqslant p-1$.
\end{proposition}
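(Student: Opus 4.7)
The plan is to apply the Ellingsrud--Skjelbred--Kemper formula (Theorem \ref{ES}) to the Sylow $p$-subgroup of $C_{2p}$, where the ``$p$'' in that theorem refers to $\cha(\F)=2$ rather than the odd prime $p$ appearing in the statement. Since $p>2$ is odd, the Sylow $2$-subgroup of $C_{2p}$ is $P:=\langle \sigma^{p}\rangle\cong C_{2}$, and the relevant quantity is $\textrm{codim}(W^{P})=\rank(\sigma^{p}-1)$ acting on $W:=\af_\F^n$. I first decompose $W$ into indecomposable summands using the classification recalled at the beginning of the section: $W\cong \bigoplus_{i=0}^{p-1} a_i V_i \oplus \bigoplus_{j=0}^{p-1} c_j W_j$ for some $a_i,c_j\in\N$, with $a_0=0$ enforced by the reduced hypothesis.

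Next I would compute $\sigma^{p}$ on each indecomposable piece. On $V_i$, $\sigma^{p}$ acts as the scalar $\xi^{pi}=1$, hence trivially, so $V_i$ contributes nothing to $\textrm{codim}(W^{P})$. On $W_i$, writing $\sigma=\xi^{i}I+N$ with $N$ the nilpotent matrix having a single nonzero entry $1$ in position $(2,1)$ (so $N^2=0$), the binomial expansion collapses to $\sigma^{p}=\xi^{pi}I+p\,\xi^{(p-1)i}N=I+\xi^{-i}N$, using $\xi^{p}=1$ together with $p=1$ in $\F$ (as $p$ is odd). Thus $\rank(\sigma^{p}-1)=1$ on each $W_i$ regardless of $i$, and summing over the decomposition gives $\textrm{codim}(W^{P})=\sum_{j=0}^{p-1} c_j$.

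The Cohen--Macaulay assumption combined with Theorem \ref{ES} then forces $\textrm{codim}(W^{P})\leqslant 2$, so at most two of the $c_j$ are nonzero when counted with multiplicity. Relabelling the surviving indices as $k$ and $s$ yields the asserted decomposition $\bigoplus_{i=1}^{p-1} a_i V_i \oplus bW_k \oplus cW_s$ with $b,c\in\{0,1\}$ and $0\leqslant k,s\leqslant p-1$. The only point requiring care is the observation in the second step that even $W_{0}$ is not pointwise fixed by $P$ --- the off-diagonal transvection survives the $p$-th power precisely because $p$ is odd --- so that copies of $W_{0}$ cannot proliferate freely and must be counted on equal footing with the other $W_j$.
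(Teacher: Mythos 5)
Your proposal is correct and follows essentially the same route as the paper: decompose $\af_{\F}^{n}$ into indecomposables, apply Theorem \ref{ES} to the Sylow $2$-subgroup $P=\langle\sigma^{p}\rangle$, and use that $\rank(\sigma^{p}-1)$ is $0$ on each $V_{i}$ and $1$ on each $W_{j}$ to force at most two two-dimensional summands. The only difference is that you spell out the binomial computation $\sigma^{p}=I+\xi^{-i}N$ (including the $W_{0}$ case), which the paper states without proof.
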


\begin{proof}
As a reduced modular representation of $C_{2p}$, $\af_{\F}^{n}$ is isomorphic to
$(\oplus_{i=1}^{p-1} a_{i}V_{i})\oplus (\oplus_{j=0}^{p-1} b_{j}W_{i})$ where
$a_{i},b_{j}\in\N$. Let $P$ be the Sylow 2-subgroup of $C_{2p}$, generated by $\sigma^{p}$. Since $\af_{\F}^{n}/C_{2p}$ is Cohen-Macaulay, it follows from Theorem \ref{ES} that codim$(\af_{\F}^{n})^{P}\leqslant 2$, i.e., the rank of $\sigma^{p}-1$ is less than or equal to 2. Note that the rank of $\sigma^{p}-1$ on each $V_{i}$ and $W_{j}$ are zero and one respectively.
Hence, $\af_{\F}^{n}$ is isomorphic to $\oplus_{i=1}^{p-1} a_{i}V_{i}\oplus bW_{k}\oplus cW_{s}$ for some $b,c\in\{0,1\}$ and $0\leqslant k,s\leqslant p-1$, as desired.
\end{proof}

The main purpose of this section is to study 2-dimensional reduced modular quotient varieties $\af_{\F}^{2}/C_{2p}$ and their singularities over $\F$. By Proposition \ref{prop4.1} we see that $\af_{\F}^{2}$ is isomorphic to either $V_{i}\oplus V_{j}$ or $W_{k}$ for some $1\leqslant i,j\leqslant p-1$ and $0\leqslant k\leqslant p-1$.
This leads us to calculate two families of invariant rings $\F[V_{i}\oplus V_{j}]^{C_{2p}}$ and $\F[W_k]^{C_{2p}}$.

The following result due to Harris-Wehlau \cite[Section 3]{HW2013} has already exhibited a minimal generating set for the invariant ring $\F[V_{i}\oplus V_{j}]^{C_{2p}}$ and all relations among this generating set.

\begin{proposition}\label{HWp}
For any $1\leqslant i,j\leqslant p-1$, there exists $m\in\N^+$ such that the invariant ring $\F[V_{i}\oplus V_{j}]^{C_{2p}}=\F[x,y]^{C_{p}}$ is minimally generated by
$\{u_{k}:=x^{a_{k}}y^{b_{k}}\mid k=1,2,\dots,m\}$
where $p=a_{1}>a_{2}>\cdots>a_{m}=0$ and $0=b_{1}<b_{2}<\cdots <b_{m}=p$. Moreover, let
$$\pi:\F[U_{1},U_{2},\dots,U_{m}]\ra\F[x,y]^{C_{p}},\quad\textrm{ by~~ } U_{k}\mapsto u_{k}$$
be the standard, surjective $\F$-algebra homomorphism, where $\F[U_{1},U_{2},\dots,U_{m}]$ denotes a polynomial algebra. Then there exist $m-1\choose 2$ elements
$$\{R_{kt}\mid 1\leqslant k,t\leqslant m\textrm{ and }t-k\geqslant 2\}$$
minimally generate $\ker(\pi)$, where
$R_{kt}:=U_{k}U_{t}-U_{k+1}\prod_{s=k+1}^{t-1} U_{s}^{d_{kts}}$
for some $d_{kts}\in\N^{+}$.
\end{proposition}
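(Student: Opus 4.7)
The plan is to reduce the computation to the classical diagonal $C_p$-invariant ring in two variables, identify the result as a normal affine semigroup algebra, extract the Hilbert basis via a residue-class argument modulo $p$, and then describe the toric ideal of relations by a greedy staircase reduction. The main obstacle lies in the final step: proving that the $\binom{m-1}{2}$ binomials $R_{kt}$ form a \emph{minimal} generating set of $\ker(\pi)$, which requires careful control of the staircase of $S$.

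For the reduction, note that $\cha(\F)=2$ and each $V_\ell$ is the one-dimensional representation on which $\sigma$ acts by the $p$-th root of unity $\xi^\ell$. The Sylow $2$-subgroup of $C_{2p}$ is generated by $\sigma^p$, which acts on $V_i\oplus V_j$ as $\xi^{ip}=\xi^{jp}=1$ and so acts trivially. Hence $\F[V_i\oplus V_j]^{C_{2p}}=\F[x,y]^{C_p}$ with $C_p=C_{2p}/C_2$ acting diagonally via $x\mapsto\xi^i x$ and $y\mapsto\xi^j y$. Since each monomial $x^ay^b$ is an eigenvector of this diagonal action, an invariant polynomial must be supported on monomials with $ia+jb\equiv 0\pmod p$; therefore $\F[x,y]^{C_p}=\F[S]$ is the semigroup algebra of $S=\{(a,b)\in\N^2 : ia+jb\equiv 0\pmod p\}$.

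For the Hilbert basis, I would use that $S$ is a saturated affine semigroup and so has a unique minimal generating set $H$. The identities $\gcd(i,p)=\gcd(j,p)=1$ force the only elements of $S$ on the coordinate axes to be multiples of $(p,0)$ and $(0,p)$, placing $(p,0)$ and $(0,p)$ into $H$ and forcing any other $(a,b)\in H$ to satisfy $0<a<p$ and $0<b<p$ (otherwise one could subtract $(p,0)$ or $(0,p)$). Strict monotonicity follows from a short argument: if $(a,b),(a,b')\in H$ with $b<b'$, then $(0,b'-b)\in S$, which forces $p\mid b'-b$ and hence $b'-b\ge p$, incompatible with the bounds already established. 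Ordering $H=\{u_1,\ldots,u_m\}$ by decreasing $a$-coordinate therefore yields $p=a_1>\cdots>a_m=0$, and the $b_k$'s are strictly increasing from $0$ to $p$ by the symmetric argument.

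For the relations, I would fix a pair $k<t$ with $t-k\ge 2$ and consider $u_ku_t = x^{a_k+a_t}y^{b_k+b_t}$. Because $a_{k+1}<a_k\le a_k+a_t$ and $b_{k+1}<b_t\le b_k+b_t$, the monomial $u_{k+1}$ divides $u_ku_t$ in $\F[S]$, and iterating the greedy division by $u_{k+1},u_{k+2},\ldots,u_{t-1}$ (which stays inside the staircase of $S$) produces a unique decomposition $u_ku_t=u_{k+1}\prod_{s=k+1}^{t-1}u_s^{d_{kts}}$ with $d_{kts}\in\N$. To verify that the resulting binomials $R_{kt}$ generate $\ker(\pi)$, I would invoke the standard fact that toric ideals are generated by pure binomials, together with a staircase reduction showing that any such binomial reduces to zero modulo the $R_{kt}$. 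For minimality, the monomial $U_kU_t$ occurs on no other $R_{k't'}$ and, by a graded degree argument, cannot appear as a monomial on any right-hand side either, so removing any single $R_{kt}$ leaves $U_kU_t$ outside the ideal generated by the remaining binomials. The delicate part is confirming that the greedy algorithm always terminates in the advertised form and that the count $\binom{m}{2}-(m-1)=\binom{m-1}{2}$ is sharp; here one would likely need a direct combinatorial examination of the staircase of $S$ or a Gr\"obner basis computation for the toric ideal.
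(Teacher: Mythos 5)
The paper does not prove this proposition at all: it is imported verbatim from Harris--Wehlau \cite[Section 3]{HW2013}, so there is no internal proof to compare yours against. Judged on its own, the first half of your reconstruction is correct and essentially complete: the reduction to $\F[x,y]^{C_p}$ via the trivial action of $\sigma^p$, the identification with the semigroup algebra of $S=\{(a,b)\in\N^2: ia+jb\equiv 0\pmod p\}$, and the Hilbert-basis analysis (the axis generators $(p,0),(0,p)$, the bounds $0<a,b<p$ for the interior generators, and the strict monotonicity) all work. One small addition is needed for the ordering claim: you should say explicitly that the minimal generators of a saturated affine semigroup form an antichain for the componentwise order, which is what converts ``distinct $a$'s and distinct $b$'s'' into ``decreasing $a$ forces increasing $b$''.

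The second half, on the relations, is where the actual content of the proposition lies, and there your argument is a plan rather than a proof, as you yourself flag. Concretely: (i) you show that $u_{k+1}$ divides $u_ku_t$ inside $S$, but not that the quotient is then a product of $u_{k+1},\dots,u_{t-1}$ \emph{only} --- a priori the greedy remainder could require generators of index $\leqslant k$ or $\geqslant t$ --- and your uniqueness claim for that factorization is false in general (if $u_{k+1}u_{k+3}=u_{k+2}^2$ the intermediate factorizations are not unique), though only existence is needed; (ii) ``a staircase reduction showing that any binomial in $\ker(\pi)$ reduces to zero modulo the $R_{kt}$'' is precisely the theorem to be proved, and the fact that toric ideals are binomial gives no traction until this reduction is carried out (the standard route is via the Hirzebruch--Jung continued fraction, as in Riemenschneider's description of cyclic quotient surface singularities, or the induction in Harris--Wehlau); (iii) for minimality, ``$U_kU_t$ appears in no other relation'' must be upgraded to linear independence of the $R_{kt}$ in $\ker(\pi)/\mathfrak{m}\ker(\pi)$, most cleanly via the $\Z^2$-grading by $(a,b)$-weight. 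Finally, your normalization $d_{kts}\in\N$ is the right one: for $p=5$, $i=1$, $j=2$ one has $u_1=x^5$, $u_2=x^3y$, $u_3=xy^2$, $u_4=y^5$ and $u_1u_4=u_2u_3^2$, so the exponent of $U_2$ beyond the displayed factor $U_{k+1}$ is $0$; the ``$d_{kts}\in\N^{+}$'' in the statement appears to be a slip.
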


\begin{remark}\label{rem4.3}
{\rm
The invariant ring $\F[V_{i}\oplus V_{j}]^{C_{2p}}$ is not necessarily a complete intersection. For example,
if $p=5$, Magma calculation shows that $\F[V_{1}\oplus V_{2}]^{C_{10}}$ is minimally generated by four elements $\{xy^{2},x^{3}y,x^{5},y^{5}\}$, subject to three relations $\{R_{13},R_{14},R_{24}\}$.
$\hbo$}\end{remark}

\begin{remark}\label{rem4.4}
{\rm
The invariant ring $\F[V_{i}\oplus V_{j}]^{C_{2p}}$ is not necessarily Gorenstein in general. For example,
if $p=5$, then the image of $C_{10}$ on $V_{1}\oplus V_{2}$ consists of $$\left\{\begin{pmatrix}
     1 & 0   \\
      0&1
\end{pmatrix},\begin{pmatrix}
    \xi  & 0   \\
    0  &  \xi^{2}
\end{pmatrix},\begin{pmatrix}
    \xi^{2}  & 0   \\
    0  &  \xi^{4}
\end{pmatrix}, \begin{pmatrix}
    \xi^{3}  & 0   \\
    0  &  \xi
\end{pmatrix},\begin{pmatrix}
    \xi^{4}  & 0   \\
    0  &  \xi^{3}
\end{pmatrix}\right\}$$
in which there are no reflections. Assume that $\F[V_{1}\oplus V_{2}]^{C_{10}}$ is Gorenstein.  By Watanabe \cite[Theorem 1]{Wat1974b} we see that the image of $C_{10}$ on $V_{1}\oplus V_{2}$ is contained in $\SL(2,\F)$. This is a contradiction.
$\hbo$}\end{remark}

Recall that $\F[V_{i}\oplus V_{j}]^{C_{2p}}$ is always Cohen-Macaulay; see Smith \cite[Theorem 1.2]{Smi1996}. The example in Remark \ref{rem4.4} leads us to consider when $\F[V_{i}\oplus V_{j}]^{C_{2p}}$ is Gorenstein. Here we prove Theorem \ref{thm1.4}.

\begin{proof}[Proof of Theorem \ref{thm1.4}]
$(\RA)$ Assume by way of contradiction that $i+j\neq p$. The matrix corresponding to the generator $\sigma$ of $C_{2p}$ on
$V_{i}\oplus V_{j}$ is $\begin{pmatrix}
    \xi^{i}  &    0\\
     0 &  \xi^{j}
\end{pmatrix}$, which has determinate $\xi^{i+j}\neq 1$. This indicates that the image of $C_{2p}$ on $V_{i}\oplus V_{j}$ is not contained in $\SL(2,\F)$. By Watanabe \cite[Theorem 1]{Wat1974b}, the fact that the image of $C_{2p}$ contains no reflections, together with the assumption that $\F[V_{i}\oplus V_{j}]^{C_{2p}}$ is Gorenstein, implies that it must be contained in $\SL(2,\F)$. This contradiction shows that $i+j=p$.
$(\LA)$ Assume that $i+j=p$. Then $\det(\sigma)=\xi^{i+j}=\xi^{p}=1$. Thus the image of $C_{2p}$ is contained in $\SL(2,\F)$.
By Watanabe \cite[Theorem 1]{Wat1974a} (or Braun \cite[Theorem]{Bra2011}) we see that $\F[V_{i}\oplus V_{j}]^{C_{2p}}$ is Gorenstein.
\end{proof}

Now we consider the invariant ring $\F[W_{k}]^{C_{2p}}$ for $k=0,1,\dots,p-1$. Clearly, the action of $C_{2p}$ on $W_{0}$ is not faithful, and actually, in this case, $\F[W_{0}]^{C_{2p}}=\F[x,y]^{C_{2}}=\F[x,xy+y^{2}]$ is a polynomial algebra. Note that the actions of $C_{2p}$ on $W_{k}$ are faithful for $k=1,2,\dots,p-1$.

Fix $k\in \{1,2,\dots,p-1\}$.  Suppose $\sigma=\begin{pmatrix}
    \xi^{k}  &  0  \\
    1  &  \xi^{k}
\end{pmatrix}$ denotes a generator of $C_{2p}$ on $W_{k}$. Then $\sigma^{p}$ generates a subgroup of $C_{2p}$ of order 2, say $H$. Further, we have seen that $\F[W_{k}]^{H}=\F[x,y]^{C_{2}}=\F[h_1,h_2]$ is a polynomial algebra, where $h_1:=x$ and $h_2:=xy+y^{2}$. Thus,
$$\F[W_{k}]^{C_{2p}}=(\F[W_{k}]^{H})^{C_p}\cong \F[h_1,h_2]^{C_p}$$
where $C_p$ can be generated by $\tau:=\begin{pmatrix}
    \xi  &  0  \\
    0 &  \xi
\end{pmatrix}$, i.e., $\tau(h_1)=\xi^{-1}\cdot h_1$ and $\tau(h_2)=\xi^{-2}\cdot h_2$.
Therefore, $\F[W_{k}]^{C_{2p}}\cong\F[h_1,h_2]^{C_p}\cong \F[V_1\oplus V_2]^{C_p}\cong \F[V_1\oplus V_2]^{C_{2p}}.$

Applying Proposition \ref{HWp}, we derive

\begin{proposition}\label{prop4.6}
For $1\leqslant k\leqslant p-1$, there exists $m\in \N^+$ such that $\F[W_k]^{C_{2p}}=\F[x,y]^{C_{2p}}$ is minimally generated by
$\{f_{i}:=h_1^{a_{i}}h_2^{b_{i}}\mid i=1,2,\dots,m\}$
where $p=a_{1}>a_{2}>\cdots>a_{m}=0$ and $0=b_{1}<b_{2}<\cdots <b_{m}=p$. Moreover, let
$$\pi:\F[U_{1},U_{2},\dots,U_{m}]\ra\F[x,y]^{C_{p}},\quad \textrm{ by }U_{i}\mapsto f_{i}$$
be the standard, surjective $\F$-algebra homomorphism, where $\F[U_{1},U_{2},\dots,U_{m}]$ denotes a polynomial algebra. Then there exist $m-1\choose 2$ elements
$$\{R_{ij}\mid 1\leqslant i,j\leqslant m\textrm{ and }j-i\geqslant 2\}$$
minimally generate $\ker(\pi)$, where
$R_{ij}:=U_{i}U_{j}-U_{i+1}\prod_{s=j+1}^{j-1} U_{s}^{d_{ijs}}$
for some $d_{ijs}\in\N^{+}$.
\end{proposition}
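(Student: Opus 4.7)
The plan is to apply Proposition \ref{HWp} through the chain of isomorphisms
\[
\F[W_k]^{C_{2p}}=(\F[W_k]^{H})^{C_p}\cong\F[h_1,h_2]^{C_p}
\]
recorded immediately before the statement. First, I would identify the $C_p$-module structure on $\F h_1\oplus \F h_2$: the generator $\tau$ acts by $h_1\mapsto \xi^{-1}h_1$ and $h_2\mapsto \xi^{-2}h_2$, exhibiting $\F h_1\oplus \F h_2$ as a copy of $V_{p-1}\oplus V_{p-2}$ (valid since $p>2$ forces $p-1,p-2\in\{1,\dots,p-1\}$). Because the Sylow $2$-subgroup of $C_{2p}$ must act trivially on every one-dimensional character in characteristic $2$, one has $\F[h_1,h_2]^{C_p}=\F[V_{p-1}\oplus V_{p-2}]^{C_{2p}}$.

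Second, I would invoke Proposition \ref{HWp} with $(i,j)=(p-1,p-2)$ applied to the polynomial algebra $\F[h_1,h_2]$. The minimal generators produced there are monomials $h_1^{a_i}h_2^{b_i}$, indexed so that $p=a_1>a_2>\cdots>a_m=0$ and $0=b_1<b_2<\cdots<b_m=p$, and the kernel of the induced surjection from the polynomial algebra is minimally generated by exactly $\binom{m-1}{2}$ binomials $R_{ij}$ of the shape asserted. These are precisely the data appearing in the conclusion of Proposition~4.6, now expressed in the $(h_1,h_2)$ coordinates.

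No real obstacle is present: the entire argument is a formal transport of the Harris-Wehlau description through the isomorphism $\F[W_k]^H\cong\F[h_1,h_2]$, and the only computation required is the identification of the $C_p$-weights of $h_1=x$ and $h_2=xy+y^2$. That the specific exponent sequences and relation count depend on the chosen pair of weights is immaterial, because Proposition~\ref{HWp} already asserts their existence for every admissible pair in $\{1,\dots,p-1\}$; one simply records the resulting numerical data and re-reads it as a statement about $\F[W_k]^{C_{2p}}$.
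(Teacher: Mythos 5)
Your argument is correct and follows essentially the same route as the paper: the chain $\F[W_k]^{C_{2p}}=(\F[W_k]^{H})^{C_p}\cong\F[h_1,h_2]^{C_p}$ with weights $\xi^{-1},\xi^{-2}$ on $h_1,h_2$, followed by an application of Proposition \ref{HWp}. The only cosmetic difference is that you read the residual representation as $V_{p-1}\oplus V_{p-2}$ while the paper identifies it with $V_1\oplus V_2$; since the two generate the same cyclic subgroup of $\GL_2(\F)$, the invariant rings coincide and the conclusion is unaffected.
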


\begin{example}\label{counter}
{\rm For $p=3$, the invariant ring $\F[W_k]^{C_{6}}$ is  minimally generated by
$$\{f_1=x^3,f_2=x^2y+xy^2,f_3=(xy+y^2)^3\}$$
subject to one relation: $f_2^3+f_1f_3=0.$ Thus $\F[W_k]^{C_{6}}$ is a hypersurface.
$\hbo$}\end{example}

\begin{remark}\label{rem4.8}
{\rm
By Remarks \ref{rem4.3} and \ref{rem4.4} we see that $\F[W_k]^{C_{2p}}$ is not necessarily a complete intersection or Gorenstein. Further, it follows from Theorem \ref{thm1.4} that
$\F[W_k]^{C_{2p}}$ is never Gorenstein unless $p=3$.
$\hbo$}\end{remark}

\section{\scshape Wild McKay Correspondence}

In this last section we will show that Problem \ref{prob1.1} have a negative answer for the modular cases of $C_{2p}$ if the condition that ``$G$ is a small subgroup'' was dropped.

To begin with, we first recall the definition of the Grothendieck ring of varieties. 

\begin{definition}{\rm
Let $\var_{\F}$ be the set of isomorphism classes of $\F$-varieties. The \textbf{Grothendieck ring of varieties} over $\F$, written $K_0(\var_{\F})$, is the abelian group generated by $[Y]\in\var_{\F}$ subject to the following relation: If $Z$ is a closed subvariety of $Y$, then $[Y]=[Y\backslash Z]+[Z]$. It has a ring structure where the product is defined by $[Y]\cdot[Z]:=[Y\times Z]$. We denote by $\LL$ the class [$\af_{\F}^1$] of the affine line.
}\end{definition}

The McKay correspondence has many versions for different subjects of study  in the literature.
Let's recall the version of Batyrev \cite{Bat1999}  in terms of stringy invariants and over $\C$. Suppose $G$ is a finite subgroup of $\SL_n(\C)$ and $X$ denotes the associated quotient variety $\af_{\C}^n/G$. 
The McKay correspondence reveals the following equality in a certain modification of  $K_0(\var_{\C})$:
\begin{equation}\label{McKayC}
M_{\st}(X)=\sum_{g\in \conj(G)}\LL^{\age(g)},
\end{equation}
where $M_{\st}(X)$ denotes the stringy motivic invariant of $X$ defined by a resolution, $\conj(G)$ denotes the set of conjugacy classes of $G$ and $\age(g)$ denotes the \textbf{age} of $g$. Here we note that $\age(g)$ is determined by eigenvalues of elements in $g$ and thus it is independent of the choice of the representative in $g$. Moreover, if $Y\ra X$ is a crepant resolution, then $M_{\st}(X)=[Y]$; see Reid \cite{Rei2002} for the details. 
Without using resolutions of $X$, Denef and Loeser \cite{DL2002} also redefined $M_{\st}(X)$ by the theory of motivic integration over the arc space of $X$, which is workable not only over $\C$ but also  over arbitrary algebraic closed fields. 
Yasuda \cite{Yas2014} conjectured and proved for the case of $C_{p}$ that  the right hand side of (\ref{McKayC}) could be changed to some integration over the moduli space of $G$-covers. As a consequence, Problem \ref{prob1.1} has positive answer for the case of $C_{p}$.

To calculate the element $[Y]$ in the Grothendieck ring, we need the following result whose proof can be found in Hartmann
 \cite[Proposition 6.2]{Har2016}.

%We will see that the answer is negative so that the direct generalization of McKay correspondence to modular case is not feasible. The counter example is Example \ref{counter} over suitable field. Before we calculate the element [$Y$] in the Grothendieck group, we need the following two results.

\begin{proposition}\label{WQ}
Let $G$ be a finite abelian group with quotient $G\ra \Gamma$. Let $k$ be
a field of characteristic $p$, $q$ be the greatest divisor of $|G|$ prime to $p$, and let
$K/k$ be a Galois extension with Galois group $\Gamma$. Assume that the Galois action on
$K$ lifts to a $k$-linear action of $G$ on a finite dimensional $K$-vector space $V$. If $k$
contains all $q$-th roots of unity, then
$$[V/G]=\LL_k^{\dim_K V}\in K_0(\var_k).$$
\end{proposition}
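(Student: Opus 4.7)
The plan is to factor $G$ through the normal subgroup $N=\ker(G\to\Gamma)$ and then to perform Galois descent by $\Gamma$. Since elements of $N$ fix $K$ pointwise, their $k$-linear action on $V$ commutes with multiplication by $K$ and is therefore $K$-linear. Setting $n=\dim_K V$, the $K$-vector space structure identifies $V$ as a $k$-variety with the Weil restriction $\Res_{K/k}(\af_K^n)$, and accordingly $V/N=\Res_{K/k}(\af_K^n/N)$; the residual $\Gamma=G/N$-action on $V/N$ lifts the Galois action on $K$, so $V/G=(V/N)/\Gamma$ is computed by Galois descent from the $K$-scheme $\af_K^n/N$.

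Next, to compute $[\af_K^n/N]\in K_0(\var_K)$, I would decompose $N=N_{p'}\times N_p$ into its prime-to-$p$ and $p$-parts. Because $|N_{p'}|$ divides $q$ and $K$ contains all $q$-th roots of unity, $N_{p'}$ acts diagonalizably on $\af_K^n$. Stratifying by the vanishing loci of a simultaneous eigenbasis exhibits $\af_K^n$ as a disjoint union of locally closed strata $\mathbb{G}_{m,K}^{\,|S|}$ indexed by subsets $S\subseteq\{1,\dots,n\}$, each carrying a finite diagonal $N_{p'}$-action whose quotient is again a torus of the same rank; telescoping gives
\[[\af_K^n/N_{p'}]=\sum_{S\subseteq\{1,\dots,n\}}(\LL_K-1)^{|S|}=\LL_K^n.\]
The residual $p$-group $N_p$ acts unipotently in characteristic $p$, and an iterated $\mathbb{G}_a$-fibration argument, stratifying along a triangularizing flag, shows that this step preserves the class $\LL_K^n$.

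Finally I would descend to $K_0(\var_k)$. The eigenspace stratification for $N_{p'}$ and the triangularizing flag for $N_p$ can be chosen $\Gamma$-equivariantly, since the $q$-th roots of unity already lie in $k$ and the characteristic subgroups $N_{p'},N_p$ are $\Gamma$-stable. Each $\Gamma$-equivariant $K$-form of a torus or of an affine space then descends to a $k$-form of the same dimension, whose class equals $(\LL_k-1)^r$ or $\LL_k^m$ respectively; summing these descents reproduces $[V/G]=\LL_k^n$. The main obstacle, and the subtlest step, is the modular piece $N_p$: its invariants are not polynomial, so the identification of $\af_K^n/N_p$ with affine space holds only motivically, and arranging this motivic identification $\Gamma$-equivariantly so that it survives Galois descent is where the argument genuinely requires the Weil restriction framework.
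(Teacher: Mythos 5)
The paper does not prove this proposition at all: it is quoted from Hartmann \cite[Proposition 6.2]{Har2016} and used as a black box, so there is no in-paper argument to compare yours against. Judged on its own, your outline follows a sensible strategy (split off $N=\ker(G\to\Gamma)$, treat its prime-to-$p$ and $p$-parts, descend along $\Gamma$), but it starts from a wrong identification of the object being quotiented. The Weil restriction $\Res_{K/k}(\af_K^n)$ is a $k$-variety of dimension $n[K:k]$, and its quotient by $G$ has that same dimension, whereas the asserted class is $\LL_k^{\dim_K V}=\LL_k^n$. The proposition is about the $K$-scheme $\af_K^n=\operatorname{Spec}K[x_1,\dots,x_n]$ regarded as a $k$-scheme of dimension $n$, on which $G$ acts by $k$-algebra automorphisms semilinear over $K$, so that $V/G=\operatorname{Spec}\bigl(K[x_1,\dots,x_n]^G\bigr)$. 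A one-line check: for $G=\Gamma=C_2$ acting on $V=K$ by the Galois action, the intended quotient is $\operatorname{Spec}(K[x]^\Gamma)=\af_k^1$ of class $\LL_k$, while the Weil-restriction quotient is $\af_k^2/\operatorname{diag}(1,-1)\cong\af_k^2$. Relatedly, Weil restriction does not commute with finite quotients, and Galois descent of $\af_K^n/N$ means $(\af_K^n/N)/\Gamma$, not $\Res_{K/k}(\af_K^n/N)/\Gamma$; your first paragraph conflates these, and your closing remark that the argument ``genuinely requires the Weil restriction framework'' points in the wrong direction.

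Beyond the setup, the two substantive steps are asserted rather than proved, and one of them is false as stated. A $\Gamma$-equivariant $K$-form of a torus need not have class $(\LL_k-1)^r$: a nonsplit one-dimensional torus attached to a quadratic extension $L/k$ has class $\LL_k+1-[\operatorname{Spec}L]\neq\LL_k-1$ in general. So you must actually prove that the forms arising here are split; this is precisely where the hypothesis $\mu_q\subset k$ does its work (the characters of $N_{p'}$ are $k$-valued, hence the eigenspace decomposition is $G$- and $\Gamma$-stable, and one then has to descend an eigenbasis), and it cannot be dismissed as ``can be chosen $\Gamma$-equivariantly.'' For the $p$-part, the coordinate-vanishing strata used in your torus computation are not $N_p$-stable (the unipotent $N_p$ preserves each $N_{p'}$-eigenspace but not the individual eigen-coordinate hyperplanes inside it), so the two stratifications do not interact as freely as you assume; and the identity $[\af^m/P]=\LL^m$ for a finite $p$-group $P$ in characteristic $p$ requires establishing piecewise triviality of the fibration over an invariant linear form (the invariant rings are not polynomial), not merely exhibiting a triangularizing flag. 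These are exactly the points the cited proof of Hartmann has to address, so as it stands your proposal is an outline with the hard steps missing and one step incorrect.
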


%Moreover, the following fact is clear.  

%\begin{lemma}\label{lem4.3}
%There exists an algebraically closed field $\F$ of characteristic $2$, such that $\F$ contains all third roots of unity.
%\end{lemma}

%\begin{proof}
%We denote by $F_{2^n}$ a finite field of order $2^n$ for $n\in\N^{+}$. Note that $x^2+x+1$ is an irreducible polynomial over $F_{2}$, thus we could define $F_{2^2}$ to be $F_{2}[x]/(x^2+x+1)$. Clearly, it contains all third roots of unity. Now taking any algebraically closed field $\F$ containing $F_{2^2}$ completes the proof. 
%\end{proof}

We conclude this paper with the following answer to Problem \ref{prob1.1} provided that the condition that ``$G$ is a small subgroup'' was dropped.

\begin{proof}[An Example Attached to Problem \ref{prob1.1}]
Let's go back to Example \ref{counter} where we take the ground field $\F$ to be 
an algebraic closed field of characteristic $2$, which contains all third roots of unity. 

We observe that the origin is the unique
 singular locus in the quotient variety. Blowing up of the origin, we derive a crepant resolution $Y\ra X$. The exceptional set is a simple normal crossing divisor with two irreducible components, say $E_1\cup E_2$, where both $E_1$ and $E_2$ are $\mathbb{P}^1_{\F}$. Hence, 
\begin{eqnarray*}
[Y]&= &[Y\backslash \{E_1\cup E_2\}]+[E_1\cup E_2] \\
&= & [X\backslash\{0\}]+2(\LL+1)-1\\
&= & [X]+2\LL\\
&= & \LL^2+2\LL,
\end{eqnarray*}
where the last equality follows from Proposition \ref{WQ}. Therefore,  the Euler characteristic of $Y$ is equal to 3 while the the cardinality of the conjugacy classes of $C_{6}$ is 6. This shows that Problem \ref{prob1.1} fails to be valid for the case of $C_{6}$.
\end{proof}

\section*{Acknowledgments}

This research is supported by the National Natural Science Foundation of China (Grant No. 11471116, 11531007), Science and Technology Commission of Shanghai Municipality (Grant No. 18dz2271000) and SAFEA (P182009020).
The authors would like to thank Liang Chen and Yongjiang Duan for their support. 

%All the authors would like to thank for the reviewers for pointing out some typos and useful suggestions in the original version.

%%%%%%%%%%%%%%%%%%%%%%%%%%%%%%%%%%%%%%%%%%%%%%%%REFERENCES

%%%%%%%%%%%%%%%%%%%%%%%%%%%%%%%%%%%%%%%%%%%%%%%%%%%THE END
\end{document}